\newcommand{\R}{\mathbb{R}}
\newcommand{\Z}{\mathbb{Z}}
\newcommand{\C}{\mathbb{C}}
\newcommand{\Ob}{\operatorname{Ob}}
\newcommand{\e}{\varepsilon}
\newcommand{\spin}{\ifmmode{\rm Spin}\else{${\rm spin}$\ }\fi}
\newcommand{\spinc}{\ifmmode{{\rm Spin}^c}\else{${\rm spin}^c$}\fi}
\newcommand{\MCG}{\mathrm{MCG}}
\newcommand{\Vast}{\bBigg@{2.5}} 
\newtheoremstyle{thm}{}{}{\itshape}{}{\bfseries}{}{ }{} 
\newtheoremstyle{definition}{}{}{}{}{\bfseries}{}{ }{} 
\theoremstyle{thm}
\newtheorem{Theorem}{Theorem}[section]
\newtheorem{thm}[Theorem]{Theorem}
\newtheorem{lem}[Theorem]{Lemma}
\newtheorem{prop}[Theorem]{Proposition}
\newtheorem{cor}[Theorem]{Corollary}
\newtheorem*{Theorem-ohne}{Theorem}
\theoremstyle{definition}
\newtheorem{rem}[Theorem]{Remark}
\newtheorem{ex}[Theorem]{Example}
\newtheorem*{ack}{Acknowledgments}
\newtheorem*{convention}{Convention}
\renewcommand{\p@subfigure}{}
\definecolor{amaranth}{rgb}{0.9, 0.17, 0.31} 
\definecolor{carrotorange}{rgb}{0.93, 0.57, 0.13} 
\definecolor{citrine}{rgb}{0.89, 0.82, 0.04} 
\definecolor{dartmouthgreen}{rgb}{0.05, 0.5, 0.06} 
\definecolor{ballblue}{rgb}{0.13, 0.67, 0.8} 
\definecolor{ceruleanblue}{rgb}{0.16, 0.32, 0.75} 
\definecolor{amethyst}{rgb}{0.6, 0.4, 0.8} 
\definecolor{amber}{rgb}{1.0, 0.75, 0.0} 
\definecolor{burlywood}{rgb}{0.87, 0.72, 0.53} 
\numberwithin{equation}{section}
\begin{document}


\title[Trisecting a 4-dimensional book into three chapters] {Trisecting a $4$-dimensional book into three chapters} 

\author{Marc Kegel}
\address{Humboldt Universit\"at zu Berlin, Rudower Chaussee 25, 12489 Berlin, Germany \newline \indent Universität Heidelberg, Im Neuenheimer Feld 205, 69120 Heidelberg, Germany}
\email{{kegemarc@hu-berlin.de}, {kegelmarc87@gmail.com}}

\author{Felix Schm\"aschke}
\address{Humboldt-Universit\"at zu Berlin, Rudower Chaussee 25, 12489 Berlin, Germany.}
\email{schmascf@math.hu-berlin.de}


\date{\today} 

\begin{abstract}
We describe an algorithm that takes as input an open book decomposition of a closed oriented $4$-manifold and outputs an explicit trisection diagram of that $4$-manifold. Moreover, a slight variation of this algorithm also works for open books on manifolds with non-empty boundary and for $3$-manifold bundles over $S^1$. We apply this algorithm to several simple open books, demonstrate that it is compatible with various topological constructions, and argue that it generalizes and unifies several previously known constructions.
\end{abstract}

\keywords{Trisections of $4$-manifolds, open books on $4$-manifolds, fibered $2$-knots, mapping class group of compression bodies.}

\makeatletter
\@namedef{subjclassname@2020}{%
  \textup{2020} Mathematics Subject Classification}
\makeatother

\subjclass[2020]{57R65; 57K45, 57K40, 57R52}

\maketitle


\section{Introduction}

In this manuscript, we study the connections of open book decompositions with trisections on $4$-manifolds. We briefly recall the necessary notions.

\subsection{Open books}

Let $W^4$ be a smooth closed oriented $4$-manifold.  An \textit{open book decomposition} of $W$ is a codimension $2$-submanifold $B^2$ with trivial normal bundle in $W$, called the \textit{binding}, and a (locally) trivial fibration $p\colon W\setminus B\rightarrow S^1$ which is near $B$ given as the standard angular fibration $\nu B\setminus B\cong B\times (D^2\setminus\{0\})\rightarrow S^1$. The closure of a preimage $\overline{p^{-1}(\varphi)}$ is a compact $3$-manifold $M$, called the \textit{page}. The boundary of a page is the binding $B$. Turning $M$ once around the $S^1$-direction gives the isotopy class of a diffeomorphism $\Phi$ of $M$ fixing the boundary of $M$ pointwise. The conjugacy class of $\Phi$ in the mapping class group is called the \textit{monodromy}. We denote by $\Ob(M,\Phi)$ the \textit{abstract open book} with page $M$ and monodromy $\Phi$. These definitions extend to manifolds with boundary and are discussed in detail in Section~\ref{sec:openbooks}.

Open books occur naturally in many topological situations~\cite{Mi68} and can be seen as a generalization of fibered knots~\cite{Ru90}. In the last decades, open books received a lot of attention due to their close relations to contact geometry~\cite{Gi02,Et06}, foliations~\cite{It72,It73} and related areas~\cite{CPV18}, to only mention a few results. While the existence question of open books is completely settled in all other dimensions~\cite{Qu79}, in dimension $4$ this remains, as so many other questions, mysterious. 

\subsection{Trisections}
On the other hand, recently the theory of \textit{trisections} of $4$-man\-i\-folds was introduced in~\cite{GK16} saying that any closed $4$-manifold can be decomposed in three $4$-dimensional $1$-handlebodies intersecting pairwise in a $3$-dimensional $1$-handlebody and with triple intersection a closed surface. If a $4$-manifold carries an open book structure it carries natural symmetries that should give rise to natural trisections of the underlying $4$-manifold. The purpose of this paper is to work out this connection.

\subsection{Main result} We say that a diffeomorphism $\Phi\colon M \to M$ preserves a Heegaard splitting $M=K\cup_F H$ when $\Phi(H) = H$ and thus also $\Phi(K) = K$. In Proposition~\ref{prop:splitting} below, we show that for any diffeomorphism $\Phi$ there exists a Heegaard splitting, which is preseved by $\Phi$. We define $g_\Phi(M)$ as the minimal genus of a Heegaard splitting of $M$ that preserves $\Phi$. Moreover, we denote by $g(F)$ the genus of a surface $F$ and by $|X|$ the number of connected components of a space $X$.

\begin{thm}\label{thm:A}
Let $W$ be a compact smooth and oriented $4$-manifold possibly with boundary.
\begin{enumerate}
   \item If $W$ is closed and admits the structure of an abstract open book $\Ob(M,\phi)$, then $W$ carries a $(g,k)$-trisection where 
    \begin{equation*}
      (g,k)= \big(3g_\Phi(M)-2g(\partial M)+2(\vert \partial M\vert-1),g_\Phi(M)\big).
    \end{equation*}
    \item \label{closedbundle} If $W$ is closed and admits the structure of a fibre bundle over $S^1$ with fibre $M$ and monodromy $\Phi$ then $W$ carries a $(g,k)$-trisection where
    \begin{equation*}
       (g,k)= \big(3g_\Phi(M)+1,g_\Phi(M)+1\big).
    \end{equation*}
    \item If $W$ has non-empty boundary and admits the structure of an abstract open book $\Ob(M,B\cup_C P,\Phi)$, then $W$ carries a relative $(g,k;p,b)$-trisection where 
    \begin{equation*}
    (g,k;p,b)=\big(3g_\Phi(M)-2g(B)+2(\vert B\vert-1),g_\Phi(M);g(P),|\partial B|\big).
    \end{equation*}
    \item If $W$ has non-empty boundary and admits the structure of fibre bundle over $S^1$ with fibre $M$ with boundary $\partial M=P$ and monodromy $\Phi$, then $W$ carries a relative $(g,k;p,b)$-trisection where
    \begin{equation*}
      (g,k;p,b)=  \big(3g_\Phi(M)+1,g_\Phi(M)+1;g(P),0\big).
    \end{equation*}
\end{enumerate}
Moreover, these results are constructive. There exist algorithms to create explicit trisection diagrams of the claimed genera from natural input data.
\end{thm}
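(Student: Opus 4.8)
The plan is to read the trisection off the open book directly, by cutting the ``book'' along three pages --- the three \emph{chapters} of the title --- and refining this cut inside each chapter by a fibrewise Heegaard splitting of the page. First I would invoke Proposition~\ref{prop:splitting} to fix a Heegaard splitting $M = K\cup_F H$ that is preserved by the monodromy $\Phi$ (so $\Phi(K)=K$, $\Phi(H)=H$, $\Phi(F)=F$) and whose genus realises $g_\Phi(M)$; here $H$ is a genuine handlebody of genus $g_\Phi(M)$, while $K$ is a compression body whose negative boundary is carried by the binding $B=\partial M$. Since the splitting is $\Phi$-invariant, every gluing below can be made equivariant, so the monodromy may be concentrated in whichever chapter is convenient.

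Next I would write the base circle of the fibration as $S^1 = A_1\cup A_2\cup A_3$, three arcs meeting cyclically in three points, and pull this decomposition back: it cuts $W\setminus B$ into three slabs $N_i = \overline{p^{-1}(A_i)}\cong M\times[0,1]$ and simultaneously cuts the binding neighbourhood $\nu B\cong B\times D^2$ into three ``binding slices.'' Using the product structure of $N_i$ I would insert a copy of the Heegaard surface $F$ near each of its two boundary pages, splitting a collar of each end of $N_i$ into a $K$-part and an $H$-part. The three trisection pieces $X_1,X_2,X_3$ are then defined by assembling, over each chapter and in a fixed cyclic (``spiralling'') pattern, one of these $K$- or $H$-parts together with the binding slices that cap off the relevant negative boundary. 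The combinatorial pattern is chosen precisely so that each $X_i$ becomes, after these capping operations, a handlebody crossed with an interval: the $H$-contributions are literally $H\times[0,1]\cong\natural^{g_\Phi(M)}S^1\times B^3$, and a compression body becomes a genuine handlebody once its negative boundary is filled by the binding slices, so each $X_i$ is a $4$-dimensional $1$-handlebody of genus $g_\Phi(M)$.

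It then remains to verify the trisection axioms and compute the genera. I would check that each pairwise intersection $X_i\cap X_j$ is a copy of $F\times[0,1]$, amalgamated along binding pieces into $\natural^{g_\Phi(M)}S^1\times D^2$, and that the triple intersection is a single closed surface $\Sigma$ obtained by gluing three copies of $F$ along annuli and pairs of pants prescribed by the binding; an Euler-characteristic count then gives $g(\Sigma)=3g_\Phi(M)-2g(\partial M)+2(|\partial M|-1)$, the three copies of $F$ accounting for the $3g_\Phi(M)$ and the correction terms recording how many one-handles rebuild the compression body from $B$. This settles item~(1). For an $M$-bundle over $S^1$ there is no binding to cap the negative boundary of $K$; one compensates with a single stabilising $1$-handle in each piece, which is exactly what produces the $+1$ shift in both coordinates, giving the main terms of items~(2) and (4). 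For manifolds with non-empty boundary the very same construction runs verbatim but now tracking the open book, resp.\ bundle, that it induces on $\partial W$; this promotes the output to a \emph{relative} trisection and contributes the boundary data $(g(P),|\partial B|)$, resp.\ $(g(P),0)$. Finally, since the Heegaard diagram of the $\Phi$-invariant splitting (Proposition~\ref{prop:splitting} being itself constructive), the three cutting pages, and all the gluing maps are explicit, I would package the whole recipe as an algorithm producing the attaching curves of the trisection diagram via Heegaard and Kirby calculus, and then run it on the examples of the later sections.

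I expect the main obstacle to be the point sketched in the second and third paragraphs: proving that the reassembled pieces are \emph{genuinely} $4$-dimensional $1$-handlebodies --- not merely that they have free fundamental group --- and that the pairwise intersections are genuine handlebodies. This hinges on the delicate interplay between the compression-body Heegaard splitting, the binding/capping, and the threefold cut, and is exactly where $\Phi$-invariance is indispensable: without it, the mapping torus of $\Phi|_H$ has non-free fundamental group and could never be a handlebody. Getting the genus bookkeeping precisely right, and fitting the closed, bundle, and boundary cases into one uniform template, is the second and more technical hurdle.
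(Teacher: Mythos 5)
Your starting point (a $\Phi$-invariant Heegaard splitting $M=K\cup_F H$ via Proposition~\ref{prop:splitting}, and cutting the base circle into three arcs) matches the paper, but the central claim of your assembly step is false in general: a compression-body part does \emph{not} become a $4$-dimensional $1$-handlebody ``once its negative boundary is filled by the binding slices.'' A sector containing $K\times I$ glued to a binding slice $B\times(\text{third of }D^2)$ deformation retracts onto $B$ with arcs attached, so its fundamental group contains $\pi_1(B)$ as a free factor; this is free only when every binding component is a sphere. Thus your pieces $X_i$ fail to be $\natural_{g_\Phi(M)}(S^1\times B^3)$ whenever $g(\partial M)>0$ (the sphere-binding case is exactly Meier's spun setting, which the paper recovers as a special case). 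The actual proof avoids this by keeping the binding out of the sectors altogether: the trisection surface $\Sigma$ is obtained by surgering $B$ along $3k$ tubes placed at the three angles, the spine consists of three compression bodies $K_j$ from $\Sigma$ down to the fibres $F_{2j\pi/3}$ completed by copies of $H$, and \emph{all three} sectors are then copies of $H\times[0,1]$; the binding region is absorbed into the double intersections, not capped off inside a sector. Relatedly, your bookkeeping is internally inconsistent: in a $(g,k)$-trisection the pairwise intersections are genus-$g$ handlebodies bounded by the triple surface, so they cannot be $\natural_{g_\Phi(M)}(S^1\times D^2)$ while $\Sigma$ has genus $3g_\Phi(M)-2g(\partial M)+2(|\partial M|-1)$.

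The second gap is the monodromy. Saying it ``may be concentrated in whichever chapter is convenient'' does not produce either the handlebody structure of the third sector's intersection pieces or the $\gamma$-curves of a diagram; this is precisely where the paper's technical content lies, namely Theorem~\ref{thm:handle_slides_generate} (the mapping class group of the compression body rel $\partial_-K$ is generated by elementary handle maps, which in turn needs the analysis of $\ker\rho$ in Theorem~\ref{thm:kernel}) and Lemma~\ref{lem:icebreaker}, which builds from that factorization a Heegaard splitting of $K$ whose $2$-handles meet the relevant belt spheres geometrically once, yielding the compression disks $\Delta^2_{i,2}$ and the explicit attaching spheres of the diagram. Your proposal flags this as ``the main obstacle'' but offers no mechanism for it, so both the existence of the trisection with the claimed genera and the algorithmic statement remain unproved in your outline.
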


The notation $\Ob(M,B\cup_C P,\Phi)$ for open books with boundaries will be introduced in Section~\ref{sec:openbooks}. We also remark that there is a compression body with lower genus boundary $\partial M$ and higher genus boundary given by the Heegaard surface which has genus $g_\Phi(M)$. This implies that $g(\partial M) \leq g_\Phi(M)$ and thus the above claimed trisection genera are always non-negative.  

As applications of our main result, we will discuss in Section~\ref{sec:ex} several natural open books on $4$-manifolds and explicitly construct their induced trisections diagrams with the above algorithm. In particular, we describe trisection diagrams of surface bundles over $S^2$ and $S^1$-bundles over $3$-manifolds and we discuss non-trivial open books of spun lens spaces. We will see that this yields in many cases minimal trisections. More theoretically, we show that the connected sum of open books yields the connected sum of the induced trisections and that stabilizations of open books correspond to stabilizations of the trisection diagrams. 

\subsection{Previous work}
Some special cases of these theorems were already contained in the literature. In~\cite{BS18,BS21,GK16} trisections on $S^1\times M$ are constructed and the case of more general $3$-manifold bundles over $S^1$ (Theorem~\ref{thm:A}.(\ref{closedbundle}) and~(4)) was first handled by Koenig~\cite{Ko17} and extended to the case with boundary in~\cite{Di23}. We include the bundle case here (which can be seen as 'degenerated' open books with empty binding) since this gives a new perspective on Koenig's result and is completely different from his approach.

Moreover, in~\cite{Me18}, cf.~\cite{Ha17}, trisections on spun $4$-manifolds are constructed which are in the language of open books nothing but an open book with page a $3$-manifold with a single $S^2$ boundary component and trivial monodromy. Similarly, trisections on twist spun $4$-manifolds are constructed in~\cite{Me18} which can be seen as open books of punctured $3$-manifolds with a single boundary parallel sphere twist as monodromy. Actually, our proofs extend the ideas presented by Meier in the case of spuns and twist spuns. The results obtained in this work can be seen as generalizations and unification of the above-mentioned results.

In~\cite{Mi21} conditions are presented under which an open book of $S^3$ with binding a slice knot $K$ extends to an open book of $D^4$ with binding a slice disk of $K$. It would be interesting to see what the corresponding trisection diagrams of $D^4$ look like. 
In~\cite{Is18} spun lens spaces are used to construct non-diffeomorphic trisections on the same underlying $4$-manifolds and one might wonder to which extent this might generalize to arbitrary open books.
Moreover, it is claimed in~\cite{Di23} that trisection diagrams of some special open books can also be derived with the methods from~\cite{Ko17}. But while Koenig's approach yields often minimal trisections on $3$-manifold bundles over $S^1$~\cite{Ko17}, the trisections coming from the special open books constructed in~\cite{Di23} are always non-minimal since their trisection genus is always larger than the trisections genus we get from Theorem~\ref{thm:A}.

A general method to construct trisection diagrams from a given triangulation of a $4$-manifold is presented in~\cite{BHRT18}. An algorithm to construct from a $4$-dimensional open book a Kirby diagram is presented in~\cite{Hs23} and obtaining a trisection diagram from a Kirby diagram is also possible~\cite{Ke22}. So the mere existence of an algorithm to construct a trisection diagram of an open book is already contained in the literature. The point of Theorem~\ref{thm:A} is that this algorithm works well in practice, yields trisections with small genus (which are often minimal), and is compatible with other geometric operations, see Section~\ref{sec:ex}.

\subsection{Outline and organization of the paper}
We briefly outline our arguments and first start with the idea in the $3$-dimensional case. Let $M$ be a $3$-manifold with open book decomposition $p\colon M\setminus B\rightarrow S^1$ and page a compact surface $F$ with boundary $B$. By trisecting $S^1$ in three parts $A_1=[0,2\pi/3]$, $A_2=[2\pi/3,4\pi/3]$ and $A_3=[4\pi/3,2\pi]$, we get a trisection of $M$ in three $3$-dimensional handlebodies all intersecting in a $1$-manifold as follows: $M_i=p^{-1}(A_i)$ are all diffeomorphic to $I\times F$ and thus diffeomorphic to a $3$-dimensional $1$-handlebody. They intersect pairwise in $F$, i.e.\ a $2$-dimensional handlebody and the triple intersection is the binding $B$. Not that the binding is in general not connected. But every $3$-manifold admits an open book with connected binding.

The same construction will work for a $4$-dimensional open book as well. In that case, $p^{-1}(A_i)$ will be of the form $I \times \text{page}$ and thus all be diffeomorphic, but not a $3$-dimensional $1$-handlebody in general. So in the $4$-dimensional case, additional modifications will be necessary.

In Section~\ref{sec:background} we discuss the necessary background on open books, trisections, and Heegaard splittings. In particular, we discuss results about certain mapping class groups of compression bodies adapted to Heegaard splittings that were not present in the literature and might be of independent interest (Theorems~\ref{thm:handle_slides_generate} and~\ref{thm:kernel}). These results can be simplified as follows. The monodromy $\Phi$ induces after isotopy an element in the mapping class group of the compression body $K$ relative to its lower boundary, for simplicity called $\Phi$ again. We show on the one hand that $\Phi$ is given as a product of (half-)twists along properly embedded annuli and disks, and can on the other hand also be expressed by so-called elementary handle maps (slight generalizations of $1$-handle slides). 

In Section~\ref{sec:proof} we will give a constructive proof of Theorem~\ref{thm:A}, where we explicitly construct the spine of the trisection. Here the idea is that the Heegaard splitting of the page $M$ induces a splitting of the $4$-dimensional open book into two open books (with possibly empty binding). We then construct a trisection of the part coming from the $1$-handlebody of the Heegaard splittings. By following the traces of the handle maps induced by the monodromy we get an adapted handle decomposition of this open book, from which we read off the trisection surface together with the $(\alpha,\beta,\gamma)$-curves. Note that in the case of trivial monodromy and a single $S^2$-binding, this agrees with Meier's trisection of spun $3$-manifolds~\cite{Me18}.

In practice, it is usually easy to express the monodromy via elementary handle maps and thus our algorithm works well in these examples (see Section~\ref{sec:ex}). 

\begin{convention}
We work in the smooth and oriented category. All manifolds, maps, and ancillary objects are assumed to be smooth and oriented or orientation-pre\-ser\-ving. When we consider submanifolds up to isotopy we always assume that they intersect transversely. 
\end{convention}

\begin{ack}
Part of this project was carried out at the Summer 2021 trisection workshop where we also first presented the results of this paper in the final talks. We thank the organizers of that workshop, Alexander Zupan and Jeffrey Meier, for useful discussions and feedback. We also thank the referee for the careful examination of our manuscript and the many useful hints and suggestions.

M.K.\ is partially supported by the SFB/TRR 191 \textit{Symplectic Structures in Geometry, Algebra and Dynamics}, funded by the DFG (Projektnummer 281071066 - TRR 191).
\end{ack}

\section{Background}\label{sec:background}
In this section, we recall the necessary background material for our proofs. 
\subsection{Open books} \label{sec:openbooks}

Let $W$ be a compact $n$-manifold.  An \emph{open book} decomposition of $W$ is a properly embedded (non-empty) codimension-$2$ submanifold $B$ with trivial normal bundle $\nu B$ 
together with a locally trivial fibration 
\begin{equation*}
    p\colon W\setminus B\longrightarrow S^1
\end{equation*}
that looks on a tubular neighborhood $\nu B$ of $B$ like the standard angular fibration
\begin{align*}
    p\colon \nu B\setminus B\cong B\times (D^2\setminus\{0\})&\longrightarrow S^1\\
    \big(b,(r,\varphi)\big)&\longmapsto \varphi
\end{align*}
and whose restriction to the boundary also induces a locally trivial fibration
\begin{equation*}
    p\vert_{\partial W\setminus\partial B}\colon \partial W\setminus\partial B\longrightarrow S^1.
\end{equation*}
Here $B$ is the \emph{binding} and a preimage $M_\theta =\overline{p^{-1}(\theta)}$ of a point $\theta \in S^1$ is called \emph{page}. We emphasize that the boundary of any page is the binding $B$ and thus $B$ is nullhomologous in $W$.

Abstractly, we can see an open book also as page bundle over $S^1$ (i.e.\ a mapping torus of a diffeomorphism) where we fill the boundary components by collapsing the boundaries of the pages to a single binding. Let $M$ be a compact $(n-1)$-manifold with non-empty boundary $\partial M$ together with a (possibly empty) suture $C \subset \partial M$. Here a \emph{suture} $C \subset \partial M$ is a codimension-$1$ submanifold such that the complement has two connected components denoted $\partial_- M$ and $\partial_+ M$, i.e.~$\partial M=\partial_-M\cup_C \partial_+ M$. Given a diffeomorphism $\Phi\colon M \to M$ which is the identity on $\partial_- M$, we define the \emph{mapping torus} $M_\Phi$ as 
\begin{equation*}
  M_\Phi= M \times [0,1] / (x,1) \sim (\Phi(x),0).
\end{equation*}
Then the compact $n$-manifold 
\begin{equation*}
  \operatorname{Ob}(M,\partial_-M\cup_C \partial_+ M,\Phi)= M_\Phi \cup_{\partial_- M  \times S^1} \partial_- M \times D^2
\end{equation*}
(where we glue the pieces with the obvious map preserving the product structure) carries the structure of an open book. If $\partial_- M = \partial M$ we just write $\operatorname{Ob}(M,\Phi)$. By construction the boundary of $\operatorname{Ob}(M,\partial_-M\cup_C \partial_+ M,\Phi)$ is given by the mapping torus $\big(\partial_+ M\big)_{\Phi}$ and the binding is given by $\partial_- M$.

We say that a compact $n$-manifold $W$ carries an \emph{abstract open book decomposition} if it is  diffeomorphic to $\operatorname{Ob}(M,\partial_-M\cup_C \partial_+ M,\Phi)$ for some suture $C$ of $\partial M$ and some map $\Phi\colon M\rightarrow M$ as above. The conjugacy class of $\Phi$ seen as an element in the mapping class group is called the \emph{monodromy}. It is easy to see that any open book induces an abstract open book. Sometimes it will be useful to think of open books as abstract open books. 

We refer to Section~\ref{sec:ex} for explicit examples of open books. Note that every $4$-manifold $W$ admitting an open book decomposition has vanishing signature $\sigma$, since we can decompose $W$ in two parts: the neighborhood of the binding $B\times D^2$ and the mapping torus $M_\Phi$ over $M$ with monodromy $\Phi$. Both parts admit an orientation reversing diffeomorphism and thus have vanishing signature. By Novikov additivity the signature of the whole $4$-manifold vanishes. In addition, there is a second algebraic topology obstruction for the existence of an open book decomposition, which we call the Quinn invariant $i(W)$. In~\cite{Qu79} it is shown that any manifold of dimension $n\neq4$ with vanishing signature and vanishing Quinn invariant carries an open book decomposition. In forthcoming work~\cite{KS23} we will discuss the existence question of open books on $4$-manifolds in more detail. In particular, we will show that any finitely presented group is realized as the fundamental group of a closed $4$-manifold carrying the structure of an open book, analyze the stable existence question on open books, and relate them to branched covers of $4$-manifolds. Here it seems also worth pointing out that a \textit{topological} $4$-manifold $W$ that carries an open book necessarily carries a smooth structure since this manifold is obtained by gluing together $3$-dimensional manifolds via homeomorphisms which are isotopic to diffeomorphisms.

It would be interesting to see if one can determine if a given trisection is induced by an open book. Here it is worth pointing out that we can compute the signature~\cite{FKSZ18,Ta21} and the intersection form with coefficients in the group ring~\cite{FM20} from any trisection diagram.

\subsection{Mapping class groups of compression bodies} 
In our later arguments, we will need to know certain basic properties of the mapping class groups of compression bodies. Similar results are stated in~\cite{Mc06,Su77,Wa98}. Here we just present the adaptations to our specific situation. 

A \emph{compression body $K$} is a compact connected $3$-dimensional cobordism between surfaces $\partial_- K$ and $\partial_+ K$ such that $K$ is obtained by attaching $1$-handles to $\partial_- K \times [0,1]$ at $\partial_- K \times  \{1\}$. There is a dual viewpoint by attaching  $2$-handles and $3$-handles to $\partial_+ K \times [0,1]$. We call $\partial_- K$ and $\partial_+ K$ the \emph{lower} resp.\ \emph{higher genus boundary} of $K$. Unlike at some places in the literature, the surface $\partial_- K$ is neither required to be connected, closed nor without $2$-sphere components, albeit the surfaces $\partial_\pm K$ are compact and $\partial_+ K$ is connected. Additionally, we allow $\partial_- K =\emptyset$, in which case the $1$-handles are attached to a $3$-ball, and $K$ is just an ordinary handlebody.  If $\partial_\pm K$ have non-trivial boundary then we must have $\partial\partial_- K \cong \partial \partial_+ K$ and $K$ is a sutured manifold with suture $C\cong \partial \partial_\pm K$ such that $\partial K \setminus \nu C = \partial_- K \cup \partial_+ K$.   Note that the diffeomorphism type of a compression body is completely determined by the topology of its boundary. We denote by $\MCG(K,\partial_- K)$ the group of orientation preserving diffeomorphisms of $K$ that are the identity on $\partial_-K$ (and then without loss of generality also on a neighborhood $\nu C$ of the suture $C$) modulo isotopy through such diffeomorphisms. 

We want to understand the generators of the group $\MCG(K,\partial_- K)$. In the case of a handle body, a set of generators was given in~\cite{Su77,Wa98}, cf.~\cite{He20}. We will show that similar generators also work in our setup. In order to efficiently describe these generators we first explain a way to express diffeomorphisms via twists along properly embedded surfaces, defined as follows in a general $3$-manifold $M$. Let $S\subset M$ be a properly embedded $2$-disk, annulus, $2$-torus or $2$-sphere. We identify a regular neighborhood of $S$ in $M$ with $S\times [0,1]$. We choose a representative $\varphi_t\colon S \to S$ of a generator of $\pi_1(\mathrm{Diff}(S),\mathrm{id})$. A \emph{twist along $S$} is defined to be the following diffeomorphism 
\begin{equation*}
    \Phi_S\colon S \times [0,1] \longrightarrow S \times [0,1]\,,\qquad
    (p,t) \longmapsto \big(\varphi_t(p),t\big)
\end{equation*} 
extended by the identity to all of $M$. In the case that $S$ is $D^2$ or $S^1 \times [0,1]$ seen as subsets of $\C$ the twist along $S$ is explicitly given by
\begin{equation*}
    (z,t)\longmapsto( e^{2\pi i t} z,t).
\end{equation*}
In principle, this definition makes sense also for other embedded surfaces $S$ with trivial normal bundles. However, it turns out that the group $\pi_1(\mathrm{Diff}(S),\mathrm{id})$ is trivial for all other orientable surfaces $S$~\cite{EE67}. In the case when $S$ is a $2$-torus, the group $\pi_1(\mathrm{Diff}(S),\mathrm{id})$ is canonically isomorphic to $\pi_1(S,p)$ via $[\varphi_t] \mapsto [t \mapsto \varphi_t(p)]$. In order to specify the choice of the generator along which we define the twist, it is convenient to present a closed embedded curve $c$ in $S$ and use the previous isomorphism. In that case, we say that $\Phi_S$ is the twist along $S$ \emph{in the direction of $c$}. 

In the case of a separating disk, we define a square root of a twist along this disk. More precisely, suppose that a properly embedded disk $S\cong D^2$ cuts $M$ into $M_0$ and $M_1$, such that one of these pieces, say $M_1$, admits an involution $f$ that is $-\operatorname{Id}_{D^2}$ on $S$. Then the \emph{half-twist along $S$} is defined to be the diffeomorphism
\begin{equation*}
    \Psi_S\colon S \times [0,1] \longrightarrow S \times [0,1]\,,\qquad (p,t) \longmapsto \big(e^{\pi i t} z,t\big)
\end{equation*}
extended by the identity on $M_0$ and by $f$ on $M_1$. 

\begin{ex}\label{ex:disk_annulus_twists} Let $K$ be a compression body. A diffeomorphism as in $(i)$ -- $(iv)$ below is called \emph{elementary handle map}. See Figure~\ref{fig:handle_maps} for sketches of these.
\begin{enumerate}[(i)]
    \item Let $D$ be a co-core of a $1$-handle. The disk twist along $D$ is isotopic to the full rotation of one of the feet of the $1$-handle. We call this \emph{twisting a foot} of a handle. 
 \item Let $D$ be a proper disk in $K\setminus a$ for some $1$-handle $a$, which is isotopic to a disk in $\partial_+(K\setminus a)$ containing both attaching regions of $a$. Then $D$ cuts $K$ into two pieces. The piece containing the handle $a$ admits an involution inverting an orientation of the core of $a$. Then the half-twist along $D$ interchanges the two feet of the handle. We call this map \emph{inverting the handle $a$}. 
\item Let $A$ be a proper annulus in $K\setminus a$, which is isotopic to an annulus in $\partial_+(K\setminus a)$ containing precisely one attaching region of a $1$-handle $a$.
Then the twist along $A$ is isotopic to moving the foot of the $1$-handle corresponding to the attaching region along the core of the annulus. We call this map \emph{sliding a foot} of a handle. We encode the handle slide in terms of an arc $c$ in $\partial_+ (K \setminus a)$ with boundary on one attaching region of $a$, say $D$. We retrieve $A$ as the boundary of a regular neighborhood of $c \cup D$.
\item Let $D$ be a proper disk in $K\setminus a \cup a'$ for two $1$-handles $a$ and $a'$, which is isotopic to a disk in $\partial_+(K\setminus a \cup a')$ containing both attaching regions of $a$ and $a'$. Then $D$ cuts $K$ into two pieces. The piece containing the handles $a$ and $a'$ admits an involution mapping $a$ to $a'$ orientation preserving. We call the half-twist along $D$ \emph{exchanging handles $a$ and $a'$}.
\end{enumerate} 
To verify the statements about the elementary diffeomorphism it is convenient to study the action of the diffeomorphisms on a cut system of $\partial_+ K$ and use Theorem~\ref{thm:kernel} given below. 
\end{ex}

\begin{figure}[htbp] 
\centering
\def\svgwidth{\columnwidth}
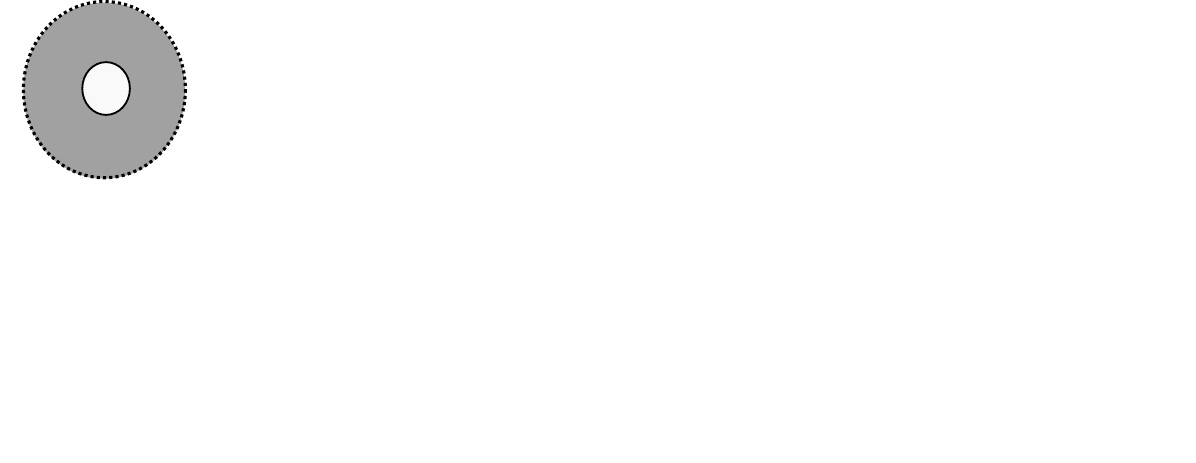
\caption{The four different elementary handle maps from Example~\ref{ex:disk_annulus_twists}. Here we are showing the action of each mapping class on  arcs, and the dotted circles represent the boundaries of the disks or annuli
along which twisting is performed.}
	\label{fig:handle_maps}
\end{figure}

Before we prove the main result, we need a generalization of a classical fact about handlebody groups, cf.~\cite{Oe02}. We consider the homomorphism
\begin{equation}\label{eq:MCG}
   \rho\colon \MCG(K,\partial_- K) \to \MCG(\partial_+ K)\,,
\end{equation}
induced by the restriction. We want to understand the kernel of this homomorphism. To give explicit generators, we define the following diffeomorphisms. Let $c$ be an arc properly embedded in $K$, both of whose endpoints lie in one $2$-sphere component of $\partial_- K$, denoted $S$. Let $T$ denote the boundary of a regular neighborhood of $c \cup S$. Then $T$ is $2$-torus and furthermore there exists an embedded closed curve $c' \subset T$ isotopic to $c$ relative to a regular neighborhood of $S$. The twist along $T$ into the direction of $c'$ is called \emph{slide homeomorphism}. A change of the choice of $c$ in its homotopy class in $\pi_1(K,S)$ changes the slide homeomorphism by an isotopy and possibly a twist along $S$ (cf.~\cite[$\mathsection 2.1$]{MM86}). 

\begin{thm}\label{thm:kernel}
The kernel of $\rho$ in Equation~\ref{eq:MCG} is generated by slide homeomorphisms and twists along $2$-sphere and $2$-torus components of $\partial_- K$. In particular, if there are no $2$-sphere and $2$-torus components, then $\rho$ is injective. 
\end{thm}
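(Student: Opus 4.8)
The plan is to reduce a kernel element to a diffeomorphism supported in a product collar of $\partial_- K$, and then to read off the remaining indeterminacy from the fundamental groups of diffeomorphism groups of surfaces; this runs parallel to the handlebody case, cf.\ \cite{Oe02}.

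First I would normalize a representative $\phi$ of a class in $\ker\rho$. Since $\rho([\phi])=1$, the restriction $\phi|_{\partial_+ K}$ is isotopic to the identity, and extending such an isotopy through a collar of $\partial_+ K$ and postcomposing one may assume $\phi$ restricts to the identity on all of $\partial K=\partial_- K\cup\partial_+ K$ without changing $[\phi]\in\MCG(K,\partial_- K)$. Next I would fix the complete system of meridian disks $\mathcal D=\{D_1,\dots,D_n\}$ given by the co-cores of the $1$-handles, so that cutting $K$ along $\mathcal D$ yields $\partial_- K\times[0,1]$, or a single $3$-ball when $\partial_- K=\emptyset$. Because $\phi$ fixes $\partial_+ K$ pointwise and $\partial\mathcal D\subset\partial_+ K$, the disk system $\phi(\mathcal D)$ has the same boundary as $\mathcal D$.

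The core step is to straighten $\phi(\mathcal D)$ onto $\mathcal D$. After making them transverse, their interior intersection is a union of circles; an innermost such circle $\gamma$ on some $\phi(D_j)$ cuts off a subdisk $\delta'\subset\phi(D_j)$ with interior disjoint from $\mathcal D$, and $\gamma$ also bounds a subdisk $\delta\subset D_i$, so $\Sigma=\delta\cup\delta'$ is an embedded $2$-sphere. If $\partial_- K$ has no $2$-sphere components then $K$ is irreducible, $\Sigma$ bounds a ball, and an innermost-disk exchange removes $\gamma$; iterating, and finally absorbing the contractible group $\mathrm{Diff}(D^2,\partial D^2)$, one arranges $\phi(\mathcal D)=\mathcal D$ and $\phi|_{\mathcal D}=\mathrm{id}$. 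In general $\Sigma$ may be an essential sphere, necessarily isotopic to the boundary of a regular neighborhood of a union of $2$-sphere components of $\partial_- K$ and arcs joining them; pushing $\phi(D_j)$ across the region it bounds amounts, up to isotopy, to composing $\phi$ with slide homeomorphisms and twists along $2$-sphere components of $\partial_- K$, all of which lie in $\ker\rho$, after which $\gamma$ can again be removed. I expect this disk-uniqueness-up-to-slides statement to be the main obstacle: it is the relative, compression-body analogue of facts about mapping class groups of reducible $3$-manifolds in \cite{Mc06}, and establishing it carefully via the innermost-disk and innermost-sphere arguments above is where the real work lies.

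Once $\phi(\mathcal D)=\mathcal D$ and $\phi|_{\mathcal D}=\mathrm{id}$, cutting along $\mathcal D$ turns $\phi$ into a diffeomorphism $\psi$ of $\partial_- K\times[0,1]$ that is the identity on its \emph{entire} boundary — on $\partial_- K\times\{0\}$, on the part of $\partial_- K\times\{1\}$ lying in $\partial_+ K$, on the two copies of each $D_i$, and near the suture. When $\partial_- K=\emptyset$ this is a diffeomorphism of $D^3$ fixing $\partial D^3$, hence isotopically trivial, which already proves $\rho$ is injective for handlebodies. In general, writing $\partial_- K=\bigsqcup_j F_j$, the diffeomorphism $\psi$ restricts to some $\psi_j\in\mathrm{Diff}(F_j\times[0,1],\partial)$ on each $F_j\times[0,1]$, and standard results on diffeomorphism groups of such products show that $\psi_j$ is, up to isotopy rel $\partial$, a product of twists along horizontal copies $F_j\times\{\tfrac{1}{2}\}$, each built from a loop in $\mathrm{Diff}(F_j)$; by \cite{EE67} and its analogue for surfaces with boundary, such loops are nontrivial precisely when $F_j$ is a $2$-sphere, contributing $\Z/2$, or a $2$-torus, contributing $\Z^2$. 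Reglueing $K$ along $\mathcal D$, a horizontal copy $F_j\times\{\tfrac{1}{2}\}$ becomes a surface parallel to the component $F_j$ of $\partial_- K$, so $\phi$ equals, in $\MCG(K,\partial_- K)$, a product of slide homeomorphisms and twists along $2$-sphere and $2$-torus components of $\partial_- K$. If $\partial_- K$ has no $2$-sphere and no $2$-torus components, none of these generators exists — a slide homeomorphism requires a $2$-sphere component — so $\rho$ is injective.
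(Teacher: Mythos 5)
Your argument for the case where $\partial_- K$ has no $2$-sphere components is essentially the paper's: normalize $\phi$ to be the identity on $\partial K$, straighten the co-core disk system by innermost-circle exchanges using irreducibility/asphericity of $K$, cut along the disks to get a diffeomorphism of $\partial_- K\times[0,1]$ fixing the whole boundary, make it level-preserving (Waldhausen, \cite{Wa68}), and read off an element of $\pi_1(\mathrm{Diff}(\partial_- K))$, which by \cite{EE67} is generated by twists along torus components; this also gives the injectivity statement. The genuine gap is in the reducible case. There your reduction hinges on the claim that when an innermost-disk exchange is obstructed by an essential sphere, the needed modification of $\phi$ is, up to isotopy, a composition of slide homeomorphisms and twists along $2$-sphere components of $\partial_- K$ -- equivalently, that these maps act transitively on isotopy classes rel boundary of the meridian disk system. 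You flag this yourself as ``where the real work lies,'' but nothing in the proposal supplies it, and it is not a routine verification: controlling exactly this indeterminacy is the entire content of the theorem when sphere components are present, so as written the proof is circular at its crucial step. (A secondary issue: if you cut along $\mathcal{D}$ while sphere components are still present, the resulting $S^2\times[0,1]$ pieces need $\pi_0\,\mathrm{Diff}(S^2\times I,\partial)\cong\Z/2$, which rests on Hatcher's theorem rather than on Waldhausen's, a much deeper input than your ``standard results'' suggests.)

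The paper avoids this obstacle by never trying to straighten the disks in the presence of sphere components. Instead, for each sphere component $B_j$ it fixes an arc $c_j$ from $B_j$ to $\partial_+ K$, uses the (much easier) fact that slide homeomorphisms act transitively on isotopy classes of such arcs with fixed endpoints (together with the light-bulb trick to keep the isotopy traces disjoint), and then composes with sphere twists along the $B_j$ to make $\phi$ the identity on regular neighborhoods of $B_j\cup c_j$. Excising these neighborhoods leaves a compression body whose lower boundary has no sphere components, and the irreducible argument finishes the proof. If you want to keep your route, you would need to actually prove the disk-transitivity statement, for which the structure theory of \cite{MM86} and \cite{Mc06} is the natural source; otherwise, rework the sphere-component case along the arc-based reduction above.
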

\begin{proof}
First, we prove the theorem under the additional assumption that $\partial_- K$ has no $2$-sphere components and explain the general case at the end of the proof. 

We claim that $K$ is aspherical. Since the claim is obvious if $K$ is a handlebody we assume that $\partial_- K \neq \emptyset$. Further, assume by contradiction that there is a non-trivial element in $\pi_2(K)$. The sphere theorem gives us a non-trivial embedded sphere $S$ in $K$. Let $D_i$ be the co-cores of the $1$-handles of $K$. $S$ must intersect the $D_i$ otherwise it represents an element in $\partial_- K \times [0,1]$, which is aspherical by assumption. Now we perform an innermost circle argument to cut $S$ into two $2$-spheres $S_1$, $S_2$ with fewer intersections with the co-cores and such that $[S_1]+[S_2]=[S]$ in $\pi_2(K)$. In particular, at least one of the $[S_i]$ is again non-trivial. Then we proceed by induction on the number of components of the intersections to obtain a contradiction. This shows that $K$ is aspherical.

Now, let $\Phi\colon K\rightarrow K$ be a diffeomorphism that is the identity on $\partial K$. The goal is to show that $\Phi$ is isotopic to a composition of $2$-torus twists along components of $\partial_- K$. For that, we consider a co-core $D$ of a $1$-handle of $K$ and its image $\Phi(D)$. Again, we can use a standard innermost circle argument and the fact that $K$ is aspherical to show that $\Phi(D)$ is isotopic to $D$ relative to the boundary.
 
Thus, we can assume, up to isotopy, that $\Phi$ is the identity on the co-cores of the $1$-handles of $K$. By cutting along the co-cores we get a diffeomorphism $\Phi'$ of $\partial_- K \times [0,1]$ that is the identity on both boundary components. By a theorem of Waldhausen~\cite{Wa68} after a further isotopy, we can assume that $\Phi'$ is level-preserving. Thus $\Phi'$ induces an element in $\pi_1(\mathrm{Diff}(\partial_- K))$. This group is abelian and freely generated by the twists along the $2$-torus boundary components~\cite{EE67}. 
 
Finally, assume that $\partial_- K$ contains $2$-sphere components, denoted $B_1,\dots,B_\ell$. Given a diffeomorphism $\Phi$ which is the identity on $\partial K$. Choose pairwise disjoint arcs $c_1,\dots,c_\ell:[0,1]\to K$, such that $c_j(0) \in B_j$ and $c_j(1) \in \partial_+ K$. The group of slide homeomorphisms acts transitively on the isotopy classes of these arcs with fixed endpoints. Hence we find a composition of slide homeomorphisms $\Phi'$ such that $\Phi'(c_j)$ is isotopic to $\Phi(c_j)$ for all $j$. Note that by using the so-called light bulb trick, we can assume that the traces of the isotopies are disjoint for $i\neq j$. 
Composing $\Phi$ with the inverse of $\Phi'$ and a further isotopy, we can assume $\Phi$ is the identity on $c_j$. Furthermore, by additionally composing with $2$-sphere twists along $B_j$ we can assume that $\Phi$ is the identity restricted to a tubular neighborhood of $c_j$ and a regular neighborhood of $B_j$ for all $j$. Denote by $K'$ the closure of the complement of the union of these neighborhoods. This complement $K'$ is a compression body without $2$-sphere components in its lower genus boundary and $\Phi$ restricted to $K'$ is the identity on $\partial K'$. We conclude as in the first part of the proof. 
\end{proof}

\begin{thm}\label{thm:handle_slides_generate}
Given a compression body $K$ and let $\Phi\colon K \to K$ be a diffeomorphism which is the identity on $\partial_- K$, then $\Phi$ is isotopic relative to $\partial_- K$ to a composition of elementary handle maps.
\end{thm}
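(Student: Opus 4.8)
The plan is to extend the proof of Theorem~\ref{thm:kernel}: there the co-cores of the $1$-handles of $K$ were fixed up to isotopy because $\Phi$ was the identity on all of $\partial K$, whereas here I must first bring them into standard position using handle slides. Write $\rho\colon\MCG(K,\partial_-K)\to\MCG(\partial_+K)$ for the restriction homomorphism of \eqref{eq:MCG}. It suffices to prove (a) that after composing $\Phi$ with finitely many elementary handle maps one may assume $\Phi\in\ker\rho$, and (b) that each generator of $\ker\rho$ produced by Theorem~\ref{thm:kernel} — a slide homeomorphism, or a twist along a $2$-sphere or a $2$-torus component of $\partial_-K$ — is isotopic relative to $\partial_-K$ to a composition of elementary handle maps.

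For (a), let $D_1,\dots,D_k$ be the co-cores of the $1$-handles, so that cutting $K$ along their union gives $\partial_-K\times[0,1]$. (If $k=0$ then $K=\partial_-K\times[0,1]$ with $\partial_-K$ connected and $\MCG(K,\partial_-K)$ trivial, so there is nothing to prove.) The disks $\Phi(D_1),\dots,\Phi(D_k)$ again form a complete system of co-cores, and by the classical fact that any two such systems of a compression body are related by handle slides and ambient isotopy (as for handlebodies, cf.~\cite{Su77,Wa98}), after composing $\Phi$ with elementary handle maps of types (ii)--(iv) one may assume $\Phi(D_i)=D_i$ as co-oriented disks for every $i$. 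Cutting $K$ along $D_1\cup\dots\cup D_k$ then produces a diffeomorphism $\Phi'$ of $\partial_-K\times[0,1]$ that is the identity on $\partial_-K\times\{0\}$ and preserves the attaching disks of the feet on $\partial_-K\times\{1\}$; since $\partial_-K\times[0,1]$ deformation retracts onto $\partial_-K\times\{0\}$, the restriction of $\Phi'$ to $\partial_-K\times\{1\}$ is isotopic to the identity of $\partial_-K$ and hence lies in the subgroup generated by disk pushes and boundary twists of the feet. Composing $\Phi$ with the corresponding elementary handle maps of types (i) and (iii), one may further assume that $\Phi'$ is the identity on both boundary surfaces of $\partial_-K\times[0,1]$, and then the argument of Theorem~\ref{thm:kernel} applies verbatim — cut off the $2$-sphere components of $\partial_-K$ by slide homeomorphisms, then use Waldhausen's theorem together with the structure of $\pi_1(\mathrm{Diff}(\partial_-K))$ — exhibiting $\Phi$ as a product of slide homeomorphisms and twists along the $2$-sphere and $2$-torus components of $\partial_-K$; in particular $\Phi\in\ker\rho$.

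It remains to establish (b), which I expect to be the main obstacle. For a $2$-torus component $T$ of $\partial_-K$, connectedness of $\partial_+K$ forces some $1$-handle $a$ to have a foot attached along a disk $D\subset T$; sliding this foot around the two standard generators $\alpha,\beta$ of $\pi_1(T,\partial D)$ gives two elementary handle maps of type (iii), and the point to check is that their commutator is, up to a foot twist of $D$ (type (i)), the diffeomorphism that pushes $D$ once around $\partial(T\smallsetminus D)$, which near $T$ is precisely the generator of $\pi_1(\mathrm{Diff}(T))$ defining the torus twist. For a $2$-sphere component $S$ the same handle $a$ provides a foot $D\subset S$, and a slide homeomorphism along an arc $c$ with endpoints on $S$ is realized by sliding $D$ along the arc obtained by closing $c$ up inside a collar of $S$, i.e.\ by an elementary handle map of type (iii) possibly followed by a foot twist; the $\mathbb{Z}/2$ twist along $S$ is then likewise seen to be a composition of elementary handle maps by inspecting its action on a cut system of $\partial_+K$. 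The input needed for (a) is classical handlebody-group theory, but the compatibility of handle slides with the $2$-sphere and $2$-torus components of $\partial_-K$ carried out in (b) — in particular the precise matching of the commutators and foot slides above with the kernel generators of Theorem~\ref{thm:kernel} — is exactly the ingredient not available in the literature in the form needed here.
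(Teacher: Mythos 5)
Your overall skeleton (use elementary handle maps to reduce to the kernel of $\rho$, then express the kernel generators of Theorem~\ref{thm:kernel} as elementary handle maps) parallels the paper's proof, but the step you yourself single out as the crucial new ingredient -- part (b) -- contains a genuine error in the torus case. You claim that the commutator of the two foot slides $S_\alpha,S_\beta$ is, up to a foot twist, the torus twist along the component $T\subset\partial_-K$. It is not. The twists along $T$ form a copy of $\pi_1(\mathrm{Diff}(T),\mathrm{id})\cong\Z^2$, one generator for each of the directions $\alpha$ and $\beta$, so a single commutator could not account for them even in principle; and concretely, under the evaluation isomorphism $\pi_1(\mathrm{Diff}(T),\mathrm{id})\cong\pi_1(T,p)$ used in the paper, the loop of diffeomorphisms of $T$ underlying the commutator of the two foot drags traces out $[\alpha,\beta]$, which is nullhomotopic in $T$. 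Hence that loop is contractible in $\mathrm{Diff}(T)$: the commutator is the disk push of $D$ around a curve parallel to $\partial(T\setminus D)$, which is a product of twists along curves parallel to $\partial D$ (i.e.\ foot twists), and is \emph{not} a generator of $\pi_1(\mathrm{Diff}(T))$. The correct mechanism, which is what the paper proves, is different: one cuts a parallel copy of $T$ into annuli whose cores are parallel to the chosen twisting direction $c$, each containing at most one attaching region; the twist in direction $c$ is then the product of the corresponding annulus twists, i.e.\ of single foot slides of each foot on $T$ along $c$-parallel curves (annuli without attaching regions contribute trivially). Nothing in your sketch produces this decomposition, so the torus generators of $\ker\rho$ are not realized by your argument.

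Two further points. Your recipe for the sphere components is also not right as stated: a slide homeomorphism of a sphere component $S$ along an arc $c$ is not ``sliding the one foot $D\subset S$ along the closed-up arc''. In the paper's normal form the separating case has $c$ traversing the very handle $a$ carrying $D$, and then such a slide is not even defined (the annulus encoding a slide of a foot of $a$ must lie in $K\setminus a$); the correct identifications are sliding the opposite foot $D_j^+$ along an arc $c'$, respectively sliding \emph{all other} feet attached to $S$ over the handle traversed by $c$ -- again obtained by cutting the torus $\partial\nu(S\cup c)$ into annuli. Finally, in step (a) you quote as ``classical'' the slide-equivalence of complete co-core systems of a compression body; in the generality needed here (disconnected $\partial_-K$ with $2$-sphere components, sutured case) this is essentially the content of the paper's inductive shadow argument, so it needs a proof or a precise reference rather than an appeal to the handlebody case -- but this is a lesser issue than the torus step, which as written would make the proof fail.
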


\begin{proof}
For the proof, we assume that $\partial_- K \neq \emptyset$. The case $\partial_- K = \emptyset$ is already proven in \cite{Su77}. Let $a_1,\ldots,a_k$ denote the $1$-handles of $K$; $D_i$ and $D_i^\pm$ the cocore disk and respectively the attaching regions of the $1$-handle $a_i$ for $i=1,\ldots,k$; and $B_0,\dots,B_m$ the connected components of $\partial_- K$. Since any two compression bodies with the same topology on the boundary are diffeomorphic and elementary handle maps conjugated by such a diffeomorphism yield elementary handle maps, we assume without loss of generality that for $i\geq 1$ the only attaching region contained in $B_i \times 1$ is $D_i^-$, see Figure~\ref{fig:model_body}. Let $\Phi$ be a diffeomorphism of $K$ which is the identity when restricted to $\partial_- K$. In the following all isotopies of $\Phi$ are understood to be relative to $\partial_- K$.
\begin{figure}
\centering
\def\svgwidth{\columnwidth}
\begingroup%
  \makeatletter%
  \providecommand\color[2][]{%
    \errmessage{(Inkscape) Color is used for the text in Inkscape, but the package 'color.sty' is not loaded}%
    \renewcommand\color[2][]{}%
  }%
  \providecommand\transparent[1]{%
    \errmessage{(Inkscape) Transparency is used (non-zero) for the text in Inkscape, but the package 'transparent.sty' is not loaded}%
    \renewcommand\transparent[1]{}%
  }%
  \providecommand\rotatebox[2]{#2}%
  \newcommand*\fsize{\dimexpr\f@size pt\relax}%
  \newcommand*\lineheight[1]{\fontsize{\fsize}{#1\fsize}\selectfont}%
  \ifx\svgwidth\undefined%
    \setlength{\unitlength}{483.53688848bp}%
    \ifx\svgscale\undefined%
      \relax%
    \else%
      \setlength{\unitlength}{\unitlength * \real{\svgscale}}%
    \fi%
  \else%
    \setlength{\unitlength}{\svgwidth}%
  \fi%
  \global\let\svgwidth\undefined%
  \global\let\svgscale\undefined%
  \makeatother%
  \begin{picture}(1,0.39707849)%
    \lineheight{1}%
    \setlength\tabcolsep{0pt}%
    \put(0,0){\includegraphics[width=\unitlength,page=1]{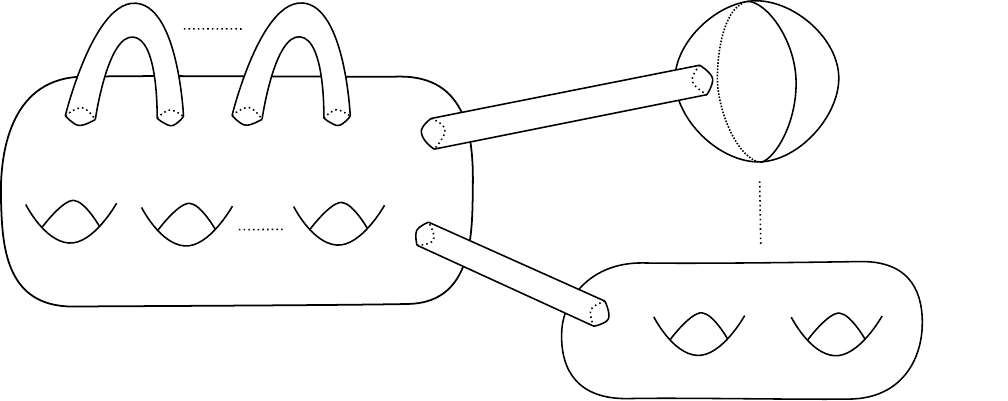}}%
    \put(0.24135092,0.11833303){\color[rgb]{0,0,0}\makebox(0,0)[lt]{\lineheight{0.09}\smash{\begin{tabular}[t]{l}$B_0$\end{tabular}}}}%
    \put(0.84786442,0.31835871){\color[rgb]{0,0,0}\makebox(0,0)[lt]{\lineheight{0.09}\smash{\begin{tabular}[t]{l}$B_1$\end{tabular}}}}%
    \put(0.55796467,0.2503318){\color[rgb]{0,0,0}\makebox(0,0)[lt]{\lineheight{0.09}\smash{\begin{tabular}[t]{l}$a_1$\end{tabular}}}}%
    \put(0.92176786,0.05281061){\color[rgb]{0,0,0}\makebox(0,0)[lt]{\lineheight{0.09}\smash{\begin{tabular}[t]{l}$B_m$\end{tabular}}}}%
    \put(0.47814484,0.08353425){\color[rgb]{0,0,0}\makebox(0,0)[lt]{\lineheight{0.09}\smash{\begin{tabular}[t]{l}$a_m$\end{tabular}}}}%
    \put(0.01987345,0.35431382){\color[rgb]{0,0,0}\makebox(0,0)[lt]{\lineheight{0.09}\smash{\begin{tabular}[t]{l}$a_{m+1}$\end{tabular}}}}%
    \put(0.34894429,0.35254466){\color[rgb]{0,0,0}\makebox(0,0)[lt]{\lineheight{0.09}\smash{\begin{tabular}[t]{l}$a_k$\end{tabular}}}}%
  \end{picture}%
\endgroup%

\caption{The standard model of a compression body $K$ used in the proof of Theorem~\ref{thm:handle_slides_generate}. }
	\label{fig:model_body}
\end{figure}

In a first step, we show that up to composing with elementary handle maps the diffeomorphism $\Phi$ is isotopic to the identity when restricted to the belt spheres $c_i:=\partial D_i$ for all $i=1,\dots,k$. 
We proceed by induction on $i$ and implicitly require that all modifications to $\Phi$ are done such that the modified map restricted to $c_j$ is still the identity for all $j<i$. After isotopy of $\Phi$ we assume that the intersection is transverse 
\begin{equation}\label{eq:intersection}
	\Phi(D_i) \cap \big(D_1 \cup D_2 \cup \ldots \cup D_k \big)\,.
\end{equation}
Let $J$ be the union of arcs in~\eqref{eq:intersection}. The closure of each connected component in $\Phi(D_i)\setminus J$ is a disk. Let $\Delta$ be such a disk. By construction the boundary $\partial \Delta$ is contained in $B_\ell \times 1$ for some $\ell =0,\dots,m$, where we have identified $K$ with the quotient $\bigcup_\ell B_\ell \times [0,1]/\sim$ identifying attaching regions $D^\pm_i$. Since $\partial \Delta$ is contractible in $K$ we find a disk $S \subset B_\ell \times 1$ with $\partial S = \partial \Delta$. We call $S$ a \emph{shadow} associated to $\Delta$. In fact, unless $B_\ell$ is a $2$-sphere, the disk $S$ is unique. We choose a shadow for all connected components in $\Phi(D_i) \setminus J$ and denote by $\mathcal{S}$ the set of these shadows. The set $\mathcal{S}$ has the structure of a graph tree, where two shadows are connected by an edge if their corresponding disks share a common arc in their boundaries. A \emph{leaf} is a shadow, which has only one neighboring shadow with respect to the tree structure. 

Now, we inductively remove all but one shadow from $\mathcal{S}$, by modifying $\Phi$ using handle slides and isotopies. If $\mathcal{S}$ contains more than one element, then $\mathcal{S}$ must contain leafs. Let $S$ be such a leaf. Then $\partial S$ intersects only one attaching region, say $D_j^\epsilon$. We separate the argument depending on two different cases. 
\begin{figure} 
\centering
\def\svgwidth{\columnwidth}
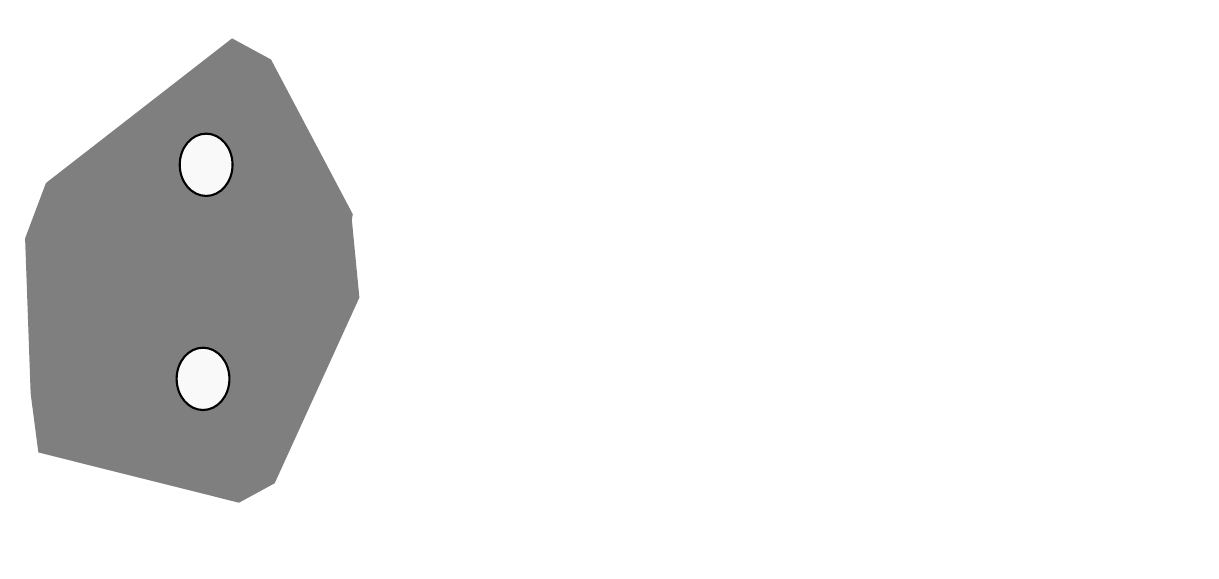
\caption{Elementary handle maps that simplify the complexity in the proof of Theorem~\ref{thm:handle_slides_generate}. }
	\label{fig:proof_handle_maps}
\end{figure}
\begin{enumerate}[(i)]
    \item  $D_j^\epsilon$ is non-separating. We claim that by passing to neighboring shadows, we find a shadow $S'$ contained in $S$, which is disjoint from $D_j^{-\delta}$ and intersects $D_j^\delta$ along its boundary for some $\delta=\pm$. Indeed, if that is not already the case for $S$ we must have $D_j^{-\epsilon} \subset S$. Let $S_1$ be the neighboring shadow of $S$. We have $\partial S_1 \cap D_j^{-\epsilon} \neq \emptyset$ and $S_1 \subset S$. If the claim is not true for $S_1$ we must have $\partial S_1 \cap D_j^\epsilon \neq \emptyset$ since we can not have $D_j^\epsilon \subset S_1$ as $S_1 \subset S$. Thus we find $S_2$ connected to $S_1$, satisfying $\partial S_2 \cap D_j^{-\epsilon} \neq \emptyset$ and $S_2 \subset S_1$. We inductively define new shadows $S_3, S_4, \ldots$ assuming each step, that we do not obtain a shadow with the claimed properties. This process eventually has to stop, since $S_{\ell+1} \subset S_\ell$ for all $\ell$. The shadow $S'$ is possibly no longer a leaf, but still we find arcs for all $D_n^\nu$ with $D_n^\nu \cap S' \neq \emptyset$, contained in $\partial_+ K$ starting and ending on $D_n^\nu$, traversing $a_j$ once, disjoint from other handles and intersecting $\Phi(D_i)$ exactly once, see $(i)$ of Figure~\ref{fig:proof_handle_maps}. Handle slides along these paths and an isotopy removes $S'$ from $\mathcal{S}$ without creating new arcs in~\eqref{eq:intersection}.
    \item $D_j^\epsilon$ is separating. If $S$ does not contain attaching regions, we remove $S$ using an isotopy. Thus we assume that $S$ contains attaching regions and thus $S \subset B_0 \times 1$, by our assumptions on $K$. Let $S'$ be the neighboring shadow of $S$, which can not contain attaching regions in its interior, since $S' \subset B_j \times 1$ with $j\geq 1$. Then either $S'$ is also a leaf and can be removed using an isotopy, or for each $D_n^\nu \subset S$ there exists an arc in $\partial_+ K$ with endpoints in $D_n^\nu$, which traverses $a_j$ twice with different orientations, disjoint from other handles, intersecting $\partial S$ transversely once and $\Phi(c_i)$ only after it came back, see Figure~\ref{fig:proof_handle_maps}. Handle slides along these paths and an isotopy removes $S$ from $\mathcal{S}$ without creating new arcs in~\eqref{eq:intersection}. 
\end{enumerate}
Repeating the steps often enough we assume that $\mathcal{S}=\{S\}$. We want to show $S$ contains exactly one attaching region given by $D_i^\pm$. Again we separate the argument.
\begin{enumerate}[(i)]
    \item $D_i$ is non-separating. Then $\Phi(D_i)$ is also non-separating. If $S$ contains none, only attaching regions of separating handles or both attaching regions of non-separating handles then $\Phi(D_i)$ is separating, in contradiction to the previous observation. Hence it must contain a single attaching region of a non-separating handle, say $D_j^+$. For any other $D_n^\nu$ contained in $S$ we find an arc contained in $\partial_+ K$ with boundary on $D_n^\nu$, traversing $a_j$ once, disjoint from other handles and intersecting $\Phi(c_i)$ transversely once. After concatenating $\Phi$ with the handle slides along the arcs, we assume that $S$ contains only $D_j^+$. We conclude that after isotopy $\Phi(c_i) = c_j$. After exchanging the handle we assume that $\Phi(c_i) = c_i$. Finally, we possibly apply inversion of the handle to show that $\Phi$ is the identity when restricted to $c_i$. 
    \item $D_i$ is separating. Because $\Phi$ is the identity on $\partial_- K$, the space $\Phi(B_i \times [0,1])$ deformation retracts to $B_i$. Attaching the handles contained in $S$ to $S \times [0,1]$ gives $\Phi(B_i \times [0,1])$. If $S$ does not contain $D^\pm_i$, then $\Phi(B_i \times [0,1])$ or its complement contains $B_i \cup B_0$, in contradiction to the previous observation. If $S$ contains another attaching region, then $\Phi(B_i \times [0,1])$ does not contract to $B_i$. This shows that up to isotopy $\Phi(c_i) = c_i$ and thus that $\Phi$ is the identity when restricted to $c_i$, since $\Phi$ can not act as inversion on $c_i$. 
\end{enumerate}
Hence, by induction we conclude that $\Phi$ is the identity on $c_i$ for all $i=1,\dots,k$. 

We show now that up to composing with elementary handle maps, $\Phi$ is the identity when restricted to $\partial_+ K$. Consider the closure of the surface $B_\ell \times 1 \setminus \bigcup_j D^\pm_j$, denoted $B_\ell'$. Because $\Phi$ is the identity on the boundary, the map $\Phi$ preserves $B_\ell'$ setwise. Let $c \subset B_\ell$ be a non-contractible embedded curve. As $\Phi$ is the identity on $\partial_- K$ the concatenated annuli $c \times [0,1] \cup_{c\times 0} \Phi(c\times [0,1])$ show that $c$ is isotopic to $\Phi(c)$. Thus $\Phi$ restricted to $B_\ell \times 1$ is isotopic to the identity. The kernel of the homomorphism on mapping class groups, induced by the inclusion $B_\ell' \to B_\ell \times 1$ is generated by twisting and sliding the attaching regions. Hence after composing these with $\Phi$, we assume that $\Phi$ is also isotopic to the identity when restricted to $B_\ell'$. Concluding we have shown that up to applying elementary handle maps, the diffeomorphism $\Phi$ restricted to $\partial_+ K$ is isotopic to the identity. With Theorem~\ref{thm:kernel} it remains to show that slide-homomorphisms and twists along $2$-sphere or $2$-torus components of $\partial_- K$ can be expresses as elementary handle maps. 

Let $\Phi_T$ be a torus twist in direction of $c \subset T$ with $T=B_j \times \frac 12$ for some $2$-torus $B_j \subset \partial_-K$. Fix an identification $T\cong S^1 \times S^1$ and strictly increasing numbers $s_1,\dots,s_n \in (0,2\pi)$ such that $c \cong 0 \times S^1$ and the annuli $A_i = [s_i,s_{i+1}] \times S^1 \subset T$  with $s_{n+1}:=s_1 +2\pi$ contain each at most one attaching region. We isotope  $A_i$ along the projection $B_j \times [0,1] \to B_j \times 1$ to proper embedded annuli with boundary on $\partial_+ K$, still denoted $A_i$, and conclude that $\Phi_T = \Phi_{A_1}\circ \dots \circ \Phi_{A_n}$. This shows that $\Phi_T$ is a product of elementary handle maps. Similarly we proceed to show that twists along $2$-sphere components of $\partial_- K$ are isotopic to a product of elementary handle maps. Finally we have to show the same for slide homeomorphisms. Let $\Phi$ be a slide homeomorphism of a $2$-sphere component $S \subset \partial_- K$ defined with respect to an arc $c$ with boundary on $S$. Since composition of slide homeomorphisms of $S$ correspond to concatenation of arcs it suffices to consider an arc $c$ which corresponds to a generator under the identification $\pi_1(K,S) \cong \pi_1(K,p)$ for some $p \in S$. There are two cases:
\begin{enumerate}[(i)]
    \item $c$ intersects non-separating $D_i$ transversely in exactly one point and is disjoint of all other handles. By simplifying assumptions $S=B_0$ and after isotopy $k_i:=c\cap a_i$ is the core of $a_i$ and $c \cap (B_0 \times [0,1])= \partial k_i \times [0,1]$. Let $T = \partial \nu(S \cup c)$. Cutting the torus $T$ as above, we conclude that $\Phi$ is given as the product of sliding all feet of all other handles attached to $B$ over $a_i$. 
    \item $c$ intersects separating $D_i$ transversely in exactly two points and intersection points with $D_j$ for $j\neq i$ are in between these two points with respect to the order on $c$. We assume without loss of generality that $S=B_j$ with $j\geq 1$ and after isotopy $c \cap B_j \times [0,1] = \partial c \times [0,1]$. We find an arc $c' \subset \partial_+(K \setminus a_j)$ with boundary on $D_j^+$ isotopic to $c \setminus (B_j \times [0,1])$ relative to $D_j^+$.  Let $T=\partial \nu (S \cup c)$. Cutting the torus we see that $\Phi$ is given by sliding $D_j^+$ along $c'$. 
\end{enumerate}
This concludes the proof. 
\end{proof}

\begin{cor}\label{cor:twists_generate}
Every element in $\MCG(K,\partial_- K )$ is represented as a product of twists along disks and annuli and half-twists along disks.
\end{cor}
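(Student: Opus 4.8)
The plan is to derive this statement directly from Theorem~\ref{thm:handle_slides_generate} together with the explicit descriptions collected in Example~\ref{ex:disk_annulus_twists}. By Theorem~\ref{thm:handle_slides_generate}, any diffeomorphism of $K$ that is the identity on $\partial_- K$ is isotopic, relative to $\partial_- K$, to a finite composition of elementary handle maps; hence every element of $\MCG(K,\partial_- K)$ is represented by such a composition, and it suffices to check that each of the four types of elementary handle map is itself a twist along a disk or an annulus, or a half-twist along a disk.

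This is immediate from the definitions in Example~\ref{ex:disk_annulus_twists}: a map of type $(i)$ (twisting a foot) is by definition the disk twist $\Phi_D$ along the co-core $D$ of a $1$-handle; a map of type $(iii)$ (sliding a foot) is the twist $\Phi_A$ along a properly embedded annulus $A$; and maps of types $(ii)$ (inverting a handle) and $(iv)$ (exchanging two handles) are half-twists $\Psi_D$ along properly embedded separating disks $D$. In each case the relevant co-core disk, annulus, and separating disk is a properly embedded surface of the kind for which twists and half-twists were defined earlier in this subsection, so nothing further is needed; a product of elementary handle maps is thus literally a product of twists along disks and annuli and half-twists along disks, which is exactly the assertion.

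Since the substantial work is already contained in Theorem~\ref{thm:handle_slides_generate}, I do not expect any real obstacle here. The only point requiring a moment of care is to confirm that the disk twist $\Phi_D$ of type $(i)$ and the half-twists of types $(ii)$ and $(iv)$ genuinely coincide with the general notions of twist and half-twist along a properly embedded disk introduced before Example~\ref{ex:disk_annulus_twists} — which they do by construction, the involutions required for the half-twists being precisely those exhibited in items $(ii)$ and $(iv)$ (inverting the orientation of the core of $a$, resp.\ interchanging the cores of $a$ and $a'$).
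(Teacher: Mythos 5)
Your proposal is correct and follows the paper's own argument exactly: the paper likewise deduces the corollary immediately from Theorem~\ref{thm:handle_slides_generate}, noting that each elementary handle map of Example~\ref{ex:disk_annulus_twists} is by definition a twist along a disk or annulus or a half-twist along a disk. Your extra check that the half-twists of types $(ii)$ and $(iv)$ use the involutions described there is a fine (if routine) elaboration of the same point.
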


\begin{proof}
This follows from Theorem~\ref{thm:handle_slides_generate} since every elementary handle map is given via twists along disks or annuli and half-twists. 
\end{proof}

\subsection{Heegaard splittings} Let $M$ be a compact $3$-manifold possibly with non-empty boundary $B=\partial M$. A \emph{Heegaard splitting} of $M$ is a pair of compression bodies $(K,H)$ such that $K \cup H = M$, $K \cap H = \partial_+ K = \partial_+ H $, $\partial_- K = B$ and $\partial_- H= \emptyset$. More generally if $M$ is a sutured manifold with suture $\partial M = B \cup_C B'$ then a \emph{sutured Heegaard splitting} is a pair of compression bodies $(K,H)$ with $H \cup K = M$, $H \cap K = \partial_+ H = \partial_+ K$, $\partial_- K = B$ and $\partial_- H = B'$. The surface $F=K \cap H$ is called \emph{Heegaard surface} of the splitting. Note that for a sutured Heegaard splitting we have $\partial B \cong \partial F \cong \partial B'$. The \emph{Heegaard genus} of the given Heegaard decomposition is the genus of its Heegaard surface. 

For a Heegaard splitting there is a stabilization procedure as follows: Given a Heegaard splitting $(K,H)$ of $M$ choose a properly embedded arc $\gamma$ in $K$ with endpoints on $F$ which is boundary parallel. Let $N$ denote the closed $3$-dimensional regular neighborhood of $\gamma$. We obtain a new Heegaard splitting by putting
\[ H' = H \cup N,\qquad K' = K \setminus \mathring{N}\,.\]
The operation of replacing $(K,H)$ with $(K',H')$ is called \emph{stabilization} of the Heegaard splitting. We say that a diffeomorphism $\Phi\colon M \to M$ \emph{preserves} the Heegaard splitting $(K,H)$ if $\Phi(K) = K$ (and thus $\Phi(H)=H$). We show in Proposition~\ref{prop:splitting} below, that it is actually always possible to find a Heegaard splitting that is preserved by a given diffeomorphism. 

A way to obtain a Heegaard splitting is to consider a handle decomposition relative to the boundary
\begin{equation}\label{eq:handle_decomposition}
 M = B \times [0,1] \cup a_1 \cup \dots \cup a_k \cup b_1 \cup \dots b_\ell \cup c\,,
\end{equation}
consisting of $k$ many $1$-handles $a_i$, $\ell$ many $2$-handles $b_j$ and a single $3$-handle $c$. A Heegaard splitting of $M$ is given by setting
\[ K = B \times I \cup a_1 \cup \ldots \cup a_k\,\qquad H = b_1 \cup \ldots \cup b_\ell \cup c\,.\]
Conversely, given a Heegaard splitting $(K,H)$ we can always find a handle decomposition as in Equation~\eqref{eq:handle_decomposition} that induces the Heegaard splitting.  

A \emph{Heegaard diagram} consists of the data $(F,\delta, \epsilon)$, where $F$ is the Heegaard surface, $\delta=(\delta_j)$ and $\epsilon=(\epsilon_i)$ denote the attaching spheres for the $2$-handles of $H$ and $K$, respectively. 

A Heegaard diagram completely determines a Heegaard splitting up to diffeomorphism. It is known that any two Heegaard splittings of the same $3$-manifold admit a common stabilization~\cite{Re33,Si33}. The following well-known proposition shows that $g_\Phi(M)$ which was used in the statement of Theorem~\ref{thm:A} is well-defined.

\begin{prop}\label{prop:splitting}
	For every diffeomorphism $\Phi\colon M \to M$ with $\Phi|_{\partial M}= \mathrm{id}$ there exists a Heegaard splitting of $M$ that is preserved by $\Phi$.
\end{prop}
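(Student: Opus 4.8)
\emph{Proof proposal.}
The plan is to obtain the splitting by a soft stabilization argument: start from an arbitrary Heegaard splitting, stabilize it until $\Phi$ carries it to an \emph{isotopic} splitting, and then absorb into $\Phi$ the ambient isotopy that undoes this. One cannot instead hope to produce a genuinely $\Phi$-invariant self-indexing Morse function $f\colon M\to[0,3]$ and take $F=f^{-1}(3/2)$: such an $f$ would have a nonempty finite $\Phi$-invariant set of critical points, so $\Phi$ would have a periodic point, which already fails for an irrational translation of $T^3$. It is the flexibility of high-genus Heegaard splittings that makes the statement work, and accordingly the splitting we construct will be preserved not necessarily by $\Phi$ itself but by a diffeomorphism $\Phi'$ isotopic to it; this is all that is needed, since the monodromy enters Theorem~\ref{thm:A} only through its class in the mapping class group and passing from $\Phi$ to $\Phi'$ does not change $\operatorname{Ob}(M,\Phi)$ up to diffeomorphism.

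Concretely, I would first fix any Heegaard splitting $(K_0,H_0)$ of $M$, for instance one induced by a handle decomposition relative to the boundary as in Equation~\eqref{eq:handle_decomposition}. Since $\Phi$ is a diffeomorphism, $\big(\Phi(K_0),\Phi(H_0)\big)$ is again a Heegaard splitting of $M$ of the same genus, and because stabilization is natural its $n$-fold stabilization is $\Phi$ applied to the $n$-fold stabilization of $(K_0,H_0)$. By the Reidemeister--Singer theorem~\cite{Re33,Si33} the two splittings $(K_0,H_0)$ and $\big(\Phi(K_0),\Phi(H_0)\big)$ admit a common stabilization, and since they have equal genus this common stabilization is reached after the same number $n$ of stabilizations on each side. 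Writing $(K,H)$ for the $n$-fold stabilization of $(K_0,H_0)$, we thus have that $(K,H)$ is isotopic, as a Heegaard splitting, to $\Phi\big((K,H)\big)=\big(\Phi(K),\Phi(H)\big)$, so there is an ambient isotopy $(\psi_t)_{t\in[0,1]}$ of $M$ with $\psi_0=\mathrm{id}$, $\psi_1(\Phi(K))=K$ and $\psi_1(\Phi(H))=H$ (the two sides being tracked, not interchanged, by the notion of isotopy of splittings). Then $\Phi':=\psi_1\circ\Phi$ preserves $(K,H)$ and is isotopic to $\Phi$. When $\Phi$ restricts to the identity on $\partial M$ one wants the same for $\Phi'$: for this one carries out the entire argument relative to a fixed collar of $\partial M$ contained in $K_0$, using only stabilizations and isotopies supported away from that collar and the boundary-relative version of Reidemeister--Singer, and uses uniqueness of collars to take $\psi_t$ to be the identity near $\partial M$.

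The main obstacle here is bookkeeping rather than geometry. Precisely, one needs the form of the Reidemeister--Singer theorem that guarantees that two Heegaard splittings of \emph{equal} genus become isotopic, as splittings with the two sides matched, after \emph{equally} many stabilizations; this is what keeps the genus of $(K,H)$ under control and rules out a side-interchange. The other point to keep in mind — flagged above — is conceptual: the proposition is naturally read with $\Phi$ determined only up to isotopy, which is the correct setting for a monodromy. Granting these, the argument is short.
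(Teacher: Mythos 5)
Your argument is correct and is essentially the paper's: the paper likewise compares a splitting (induced by a handle decomposition relative to $\partial M$) with its image under $\Phi$ and produces a common stabilization --- obtained there via Cerf theory applied to the handle decomposition and its $\Phi$-image rather than by citing Reidemeister--Singer directly --- which after an isotopy yields a splitting preserved by $\Phi$. Your additional bookkeeping (equal genus forcing equally many stabilizations, naturality and uniqueness of stabilization, and absorbing the ambient isotopy into $\Phi$ so that strictly an isotopic representative $\Phi'$ preserves the splitting) just makes explicit what the paper leaves implicit, and is consistent with how the proposition is used, since the monodromy only enters through its isotopy class.
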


\begin{proof} We start with some handle decomposition $\mathcal{H}$ of $M$ relative to its boundary with a single $3$-handle. We denote by $\mathcal{H}'$ the handle decomposition of $M$ which is given by the image of $\mathcal{H}$ under $\Phi$. By Cerf theory~\cite[Section~4]{GK15} we know that $\mathcal{H}$ and $\mathcal{H}'$ are related by finitely many handle slides and removing/introducing canceling $(1,2)$-handle pairs. It follows that the two Heegaard splittings induced by the handle decompositions $\mathcal H$ and $\mathcal H'$ admit a common stabilization that is isotopic to a Heegaard splitting which is preserved by $\Phi$.
\end{proof}

\begin{rem}
On the other hand, the genus of the constructed Heegaard splitting from Proposition~\ref{prop:splitting} will in general not be minimal. In fact, one can show that there exist examples where $g(M)<g_\Phi(M)$, see for example~\cite[Section~3]{Ta20}.
\end{rem}

\subsection{Trisections} Given integers $0 \leq k \leq g$. A \emph{$(g,k)$-trisection} of a closed $4$-manifold $W$ is a decomposition 
\begin{equation}\label{eq:trisection_decomposition}
W=W_0\cup W_1 \cup W_2\,,
\end{equation}
such that 
\begin{enumerate}[(i)]
    \item each $W_i$ is a $4$-dimensional $1$-handlebody $\natural_k (S^1 \times B^3)$
    \item for $i \neq j$, $W_i \cap W_j$ is a $3$-dimensional $1$-handlebody $\natural_g (S^1 \times D^2)$; and
    \item the triple intersection $\Sigma = W_0 \cap W_1 \cap W_2$ is a closed surface of genus $g$. 
\end{enumerate}
More generally, given non-negative integers $g,k,p,b$ and a compact $4$-manifold $W$ with non-empty boundary $\partial W$.  A \emph{relative $(g,k;p,b)$-trisection} of $W$ is a decomposition~\eqref{eq:trisection_decomposition} such that 
\begin{enumerate}[(i)]
    \item each $W_i$ is diffeomorphic to $[0,1]\times K$ where $K$ is a compression body with $\partial_+ K$ a surface of genus $k$ 
     and $\partial_- K=P$ a surface of genus $p$ both with $b$ boundary components, 
    \item for $i\neq j$, $W_i \cap W_j$ is a compression body with lower and higher genus boundary given by $P$  and $\Sigma$ respectively, where
    \item $\Sigma=W_0 \cap W_1 \cap W_2$ is a genus $g$ surface with $b$ boundary components,
    \item $W_i \cap \partial W$ diffeomorphic to $P \times I$; and
\item $(W_{i-1} \cap W_i) \cup  (W_i \cap W_{i+1})$ is a sutured Heegaard splitting of $\mathrm{cl}(\partial W_i \setminus \partial W)$, where the suture is given by the image of the two copies of $\partial P$ in $(W_{i-1} \cap W_i)$ and $(W_i \cap W_{i+1})$ under the previous diffeomorphisms.
\end{enumerate}
Note that a relative trisection induces on $\partial W$ either an $S^1$-fibration with fiber $P$ if $b=0$ or an open book decomposition with page $P$ if $b>0$. The surface $\Sigma$ is called \emph{trisection surface} and the \emph{genus} of the trisection is $g=g(\Sigma)$. Further note that $\Sigma$ is at the same time a Heegaard surface for the Heegaard splitting of $\mathrm{cl}(\partial W_i \setminus \partial W)$ given by
\begin{equation}\label{eq:trisection_Heegaard_splitting}
(W_{i-1} \cap W_i) \cup  (W_i \cap W_{i+1}) = (\partial W_{i-1} \cap \partial W_i) \cup (\partial W_i \cap \partial W_{i+1})\,.
\end{equation}
There is also a stabilization procedure for a (relative) trisection as follows. Given a trisection $(W_0,W_1,W_2)$ of $W$ possibly with non-empty boundary. For $i,j \in \{0,1,2\}$ with $i \neq j$ choose properly embedded arcs $\gamma_{ij}$ in $W_i \cap W_j$ which are pairwise disjoint, boundary parallel and have endpoints on $\Sigma$. Let $N_{ij}$ be the closed $4$-dimensional regular neighborhood of $\gamma_{ij}$ in $W$, which are also assumed to be pairwise disjoint. We define a new trisection through
\begin{itemize}
    \item $W_0' = (W_0 \cup N_{12}) \setminus \big(\mathring{N}_{01} \cup \mathring{N}_{02} \big)$
    \item $W_1' = (W_1 \cup N_{02}) \setminus \big(\mathring{N}_{01} \cup \mathring{N}_{12} \big)$
    \item $W_2' = (W_2 \cup N_{01}) \setminus \big(\mathring{N}_{02} \cup \mathring{N}_{12} \big)$
\end{itemize}
The operation of replacing $(W_0,W_1,W_2)$ with $(W'_0,W'_1,W'_2)$ is called \emph{stabilization}. Stabilizations of this kind do not change the open book decompostion induced on the boundary. There is also a \emph{relative stabilization} procedure, as explained in \cite{CIMT22}, which does change the open book decomposition induced on the boundary. It is known that any $4$-manifold admits a trisection and that any two trisections on the same manifold are equivalent up to stabilization and relative stabilization~\cite[Theorem~11 and~21]{GK16}, \cite{CIMT22}.  

A \emph{trisection diagram} is a quadruple $(\Sigma,\alpha,\beta,\gamma)$ where each triple $(\Sigma,\alpha,\beta)$, etc.\ is a Heegaard diagram corresponding to the (sutured) Heegaard splitting~\eqref{eq:trisection_Heegaard_splitting} for $i=0,1,2$ respectively. A trisection diagram uniquely determines the diffeomorphism type of the underlying $4$-manifold~\cite{GK16}.  

\section{Proof of Theorem~\ref{thm:A}}\label{sec:proof}
We assume first that  $W$ has the structure of an open book, possibly with boundary. The proof when $W$ is a $S^1$ bundle is very similar. We explain at the end what needs to be changed. The Heegaard splitting $M=K\cup_F H$ induces a decomposition of $W$ into an open book with boundary $W_K$ and a mapping cylinder $W_H$
\begin{equation}\label{eq:splitting_of_W}
   W_K := \mathrm{Ob}(K,B\cup_C F,\Phi) \,,\qquad W_H := H_\Phi \,. 
\end{equation}
Note that by abuse of notation, we use $\Phi$ to also denote its restrictions to $K$ and $H$. Furthermore, the spaces $W_K$ and $W_H$ are canonically subspaces of $W$, and their intersection is given by the mapping cylinder $F_\Phi$. The trisection on $W$ is constructed in the following steps.
\begin{enumerate}[(1)]
	\item We construct a surface $\Sigma$ in $W_K$ with $g(\Sigma)=\ell + 2k$, where $k$ and $\ell$ are the number of $1$-handles of $K$ and $H$ respectively. This surface will be the trisection surface.
	\item We construct compression bodies $K_0$, $K_1$ and $K_2$ in $W_K$ such that $\partial_+ K_j = \Sigma$ and $\partial_- K_j = F_{2j\pi/3}$ where $F_\theta$ denotes the fibre of the mapping cylinder $F_\Phi$. 
\item We prove that we obtain a relative $(2k+\ell,0;\ell,b)$-trisection of $W_K$, where $b$ denotes the number of boundary components of $F$, by showing that a regular neighborhood of $\bigcup_j K_j$ is isotopic to (a slightly smaller) $W_K$.
\item We attach three copies of $H$, denoted $H_j$, to $K_j$ for $j=0,1,2$. By the last step, each connected component of the complement is a trivial $H$-bundle over the interval, hence we obtain a $(2k+\ell,\ell)$-trisection (resp.\ a relative $(2k+\ell,\ell,p,b)$-trisection in the case when $W$ has boundary) as required, with double intersection given by $K_j \cup_{F_j} H_j$ with $F_j = F_{2j\pi/3}$ and triple intersection $\Sigma$. 
\end{enumerate}
Finally, we make this process concrete enough to obtain diagrams for the trisections. 

\subsection{Trisection surface} We fix a relative handle decomposition
\begin{equation}\label{eq:K_handle_decomposition}
 K = B\times I \cup a_1 \cup \dots \cup a_k \,
\end{equation}
from which we obtain an open book decomposition of $W_K$
\begin{equation}\label{eq:W_decomposition}
 W_K = \left. B \times D^2 \cup \left(B \times I \cup a_1 \cup \dots \cup a_k\right) \times [0,2\pi] \right/ \sim\,,
\end{equation}
with equivalence relation $(p,2\pi) \sim (\Phi(p),0)$. For $\theta \in [0,2\pi]$ let $K_\theta \subset W_K$ be the fibre of mapping torus of $\Phi$, i.e.\ the $\theta$-page. For any $i=1,\ldots,k$, $z \in D^2$ and $\theta \in [0,2\pi]$ we are going to define arcs $c^\theta_{i,z}$ in $W_K$ with endpoints on $B$, which outside of $B\times D^2$ lie inside $K_\theta$ and run parallel to the core of the handle corresponding to $a_i$. To that end fix an identification of the $1$-handles $a_i$ with $[-1,1] \times D^2$ and define points $p^\pm_i\in B$ such that $(p^\pm_i,1)$ is identified with $(\pm 1,z)$, the subset $I_\theta \subset D^2$ to be the line segment from $0$ to $e^{\sqrt{-1}\theta}$ and finally using decomposition~\eqref{eq:W_decomposition} above (cf.\ Figure~\ref{figure:extended_core}) 
\begin{figure} 
\centering
\def\svgwidth{0.7\columnwidth}
\begingroup%
  \makeatletter%
  \providecommand\color[2][]{%
    \errmessage{(Inkscape) Color is used for the text in Inkscape, but the package 'color.sty' is not loaded}%
    \renewcommand\color[2][]{}%
  }%
  \providecommand\transparent[1]{%
    \errmessage{(Inkscape) Transparency is used (non-zero) for the text in Inkscape, but the package 'transparent.sty' is not loaded}%
    \renewcommand\transparent[1]{}%
  }%
  \providecommand\rotatebox[2]{#2}%
  \newcommand*\fsize{\dimexpr\f@size pt\relax}%
  \newcommand*\lineheight[1]{\fontsize{\fsize}{#1\fsize}\selectfont}%
  \ifx\svgwidth\undefined%
    \setlength{\unitlength}{448.28038657bp}%
    \ifx\svgscale\undefined%
      \relax%
    \else%
      \setlength{\unitlength}{\unitlength * \real{\svgscale}}%
    \fi%
  \else%
    \setlength{\unitlength}{\svgwidth}%
  \fi%
  \global\let\svgwidth\undefined%
  \global\let\svgscale\undefined%
  \makeatother%
  \begin{picture}(1,0.66231089)%
    \lineheight{1}%
    \setlength\tabcolsep{0pt}%
    \put(0,0){\includegraphics[width=\unitlength,page=1]{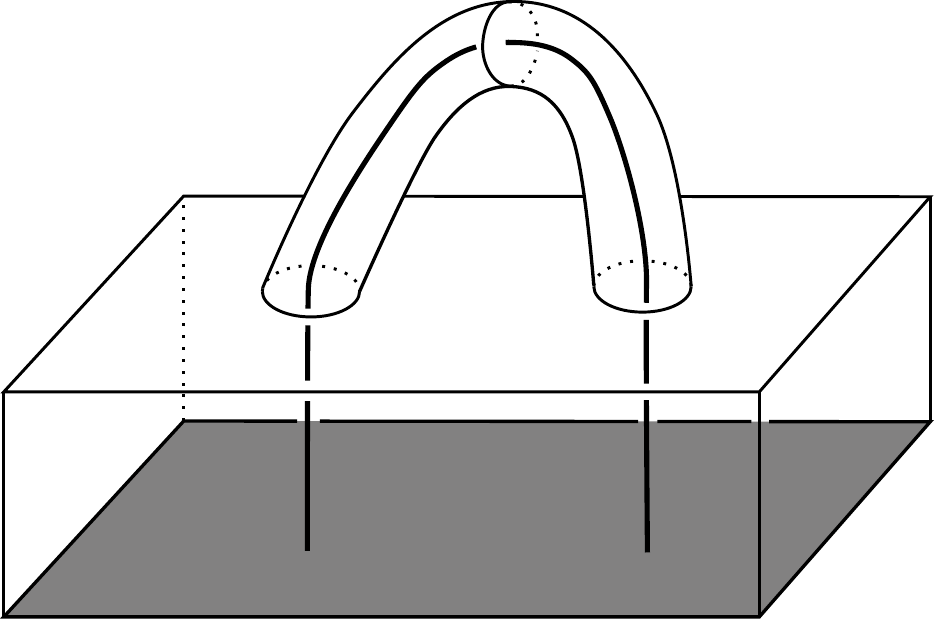}}%
    \put(0.71272281,0.55861893){\color[rgb]{0,0,0}\makebox(0,0)[lt]{\lineheight{0.1}\smash{\begin{tabular}[t]{l}$a_i$\end{tabular}}}}%
    \put(0.53655551,0.58031797){\color[rgb]{0,0,0}\makebox(0,0)[lt]{\lineheight{0.1}\smash{\begin{tabular}[t]{l}$z$\end{tabular}}}}%
    \put(0.16361304,0.05480236){\color[rgb]{0,0,0}\makebox(0,0)[lt]{\lineheight{0.1}\smash{\begin{tabular}[t]{l}$B$\end{tabular}}}}%
    \put(0.70680672,0.2776342){\color[rgb]{0,0,0}\makebox(0,0)[lt]{\lineheight{0.1}\smash{\begin{tabular}[t]{l}$c^{\theta}_{i,z}$\end{tabular}}}}%
    \put(0,0){\includegraphics[width=\unitlength,page=2]{core.pdf}}%
  \end{picture}%
\endgroup%

\caption{The definition of the arc $c_{i,z}^\theta$. Here we draw a $\theta$-page, where we identify $K_\theta$ with $K$.}
	\label{figure:extended_core}
\end{figure}
\begin{equation}\label{eq:extended_core}
   c^\theta_{i,z} := \{p^-_i,p^+_i\} \times  I_\theta \cup \big( \{p^-_i,p^+_i\} \times I \cup [-1,1] \times z\big) \times \theta\,.
\end{equation}
Note that outside of $B\times D ^2$ the arcs $c^{2\pi}_{i,z}$ and $c^0_{i,z}$ are contained in the same fibre $K_0$ and are related by
\begin{equation}\label{eq:monodromie_on_cores}
 c^{2\pi}_{i,z} \cap K_0 = \Phi (c^0_{i,z}\cap K_0)\,,
\end{equation}
where we identified $K_0$ with $K$. The trisection surface is given by surgering $B$ with handles given by small $3$-dimensional tubes with cores $c^{\theta}_{i,z}$ in three different angles. More precisely fix points $z_j:=\frac 12 e^{2\sqrt{-1}j \pi/3}\in D^2$ for all $j\in \Z$ and for any $z \in D^2$ and $\rho >0$ let $D_\rho(z) = \{ w \in D^2 \,:\, |w-z| \leq \rho \}$ be the closed disk of radius $\rho$ around $z$. Then define the trisection surface, setting $\rho=1/4$ 
\[\Sigma := B \setminus \bigcup_{\substack{i=1,\ldots,k\\j=0,1,2}} \limits \bigcup_{z \in D_\rho(z_j)} \partial c^{2j \pi/3}_{i,z}  \cup \bigcup_{\substack{i=1,\ldots,k\\j=0,1,2}} \bigcup_{z \in \partial D_\rho(z_j)} c^{2j \pi/3}_{i,z}\,. \]
\subsection{Compression bodies} We define the compression bodies by specifying two families of compression disks in $W_K$ with boundary on $\Sigma$. The first family is given using decomposition~\eqref{eq:W_decomposition} and the identification $a_i \cong [-1,1] \times D^2$ as above for $i=1,\ldots,k$ and $j=0,1,2$
\begin{equation}\label{eq:Delta1}
\Delta^1_{i,j} = 0 \times D_\rho(z_{j-1}) \times \big[2  (j-1)\pi/3\big]\,,
\end{equation}
where $[\theta] \in [0,2\pi)$ is the unique number such that $\theta - [\theta] \in 2\pi \Z$. In other words, these are the cocores of the handles which were added to $B$ to obtain $\Sigma$. 

We now define the second family. First note that the complement of $\bigcup_j D_\rho(z_j)$ cuts the circle of radius $1/2$ into three arcs, where we denote by $\mu_j$ the arc from $D_\rho(z_{j})$ to $D_\rho(z_{j+1})$ for $j=0,1,2$. Further denote by $w^-_j \in \partial D_\rho(z_j)$ and $w^+_j \in \partial D_\rho(z_{j+1})$ the start point resp.\ end point of $\mu_j$. We define for $j=0,1$ 
\begin{equation}\label{eq:Delta2}
	\Delta^2_{i,j} := \bigcup_{\theta \in [2j\pi/3,  2(j+1)\pi/3]} c^\theta_{i,w_j^-} \cup \bigcup_{z \in \mu_j} c^{2(j+1)\pi/3}_{i,z} \,.  
\end{equation}
To define the disk $\Delta^2_{i,2}$ we need a bit more work incorporating the monodromy. Using again the identification of the handle $a_i$ with $[-1,1]\times D^2$ we define smaller handles inside $a_i$ for some $z \in D^2$ via
\[
a_{i,z}:= [-1,1] \times D_\rho(z)\,.
\]
The following lemma constructs a particular Heegaard splitting of $K$ which will be important for us to define $\Delta^2_{i,2}$
\begin{lem}\label{lem:icebreaker}
For any $z,w \in D^2$ with $|z-w| > 2\rho$ and diffeomorphism $\Phi:K \to K$, there exists a permutation $\sigma$ of $\{1,\dots,k\}$ such that after suitable isotopy of $\Phi$ relative to $\partial_- K$, we have a Heegaard splitting of $K$ given by the handle decomposition
\[B\times [0,1] \cup  a_{1,z} \cup \ldots \cup a_{k,z} \cup \Phi(a_{1,w}) \cup \ldots \cup \Phi(a_{k,w}) \cup b_1 \cup \ldots \cup b_k\,,\]
with $2k$ many $1$-handles $a_{i,z}$ and $\Phi(a_{i,w})$ and $k$ many $2$-handles $b_i$ such that for all $i=1,\ldots,k$ the attaching sphere of $b_i$
\begin{enumerate}[(i)]
    \item intersects transversely the belt sphere of $\Phi(a_{i,w})$ in exactly one point and is disjoint from $\Phi(a_{j,w})$ for all $j\neq i$, and
    \item intersects transversely the belt sphere of $a_{\sigma(i),z}$ in exactly one point.
\end{enumerate}
\end{lem}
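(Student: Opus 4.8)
The plan is to reduce to the case where $\Phi$ is a product of elementary handle maps, settle the case $\Phi=\mathrm{id}$ by an explicit construction, and then induct over the handle maps. I assume, as in the application, that $\Phi$ restricts to the identity on $\partial_- K$; all isotopies below are relative to $\partial_- K$. By Theorem~\ref{thm:handle_slides_generate} I first isotope $\Phi$ so that $\Phi=\Psi_N\circ\dots\circ\Psi_1$ with each $\Psi_r$ an elementary handle map as in Example~\ref{ex:disk_annulus_twists} for the decomposition $K=B\times[0,1]\cup a_1\cup\dots\cup a_k$. After a further isotopy I also assume $\Phi$ fixes a collar of $\partial_- K$ and that each $\Psi_r$ is supported in an arbitrarily small neighbourhood of the one or two handles it involves; since $\rho=1/4<1/2$, the annuli and disks defining a foot slide or an inversion can be routed through $a_l\setminus a_{l,z}$, so I may further arrange that $\Psi_r$ is the identity on every sub-handle $a_{i,z}$ except possibly on $a_{j,z}$ when $\Psi_r$ moves a foot of $a_j$. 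The lemma is then proved by induction on $N$, the inductive hypothesis being its full conclusion for $\Psi_N\circ\dots\circ\Psi_1$.

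For the base case $\Phi=\mathrm{id}$ I take $\sigma=\mathrm{id}$. Because $|z-w|>2\rho$, inside each $a_i\cong[-1,1]\times D^2$ the sub-handles $a_{i,z}=[-1,1]\times D_\rho(z)$ and $a_{i,w}=[-1,1]\times D_\rho(w)$ are disjoint. After an isotopy I drag the feet of the $a_{i,w}$ over other handles so that the complement of $B\times[0,1]\cup\bigcup_j(a_{j,z}\cup a_{j,w})$ in $K$ becomes connected. A short direct argument then produces, for each $i$, a properly embedded disk $\Delta_i$ in that complement whose boundary meets the belt sphere of $a_{i,z}$ transversely once, meets the belt sphere of $a_{i,w}$ transversely once, and is disjoint from every other $a_{j,z}$ and $a_{j,w}$; let $b_i$ be a regular neighbourhood of $\Delta_i$. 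Since $\partial\Delta_i$ meets the belt sphere of $a_{i,w}$ in a single point, the standard cancellation lemma for a $1$-handle and a $2$-handle shows that $B\times[0,1]\cup\bigcup a_{i,z}\cup\bigcup a_{i,w}\cup\bigcup b_i$ is a relative handle decomposition of $K$ with $2k$ one-handles and $k$ two-handles, from which the asserted (sutured) Heegaard splitting is read off.

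For the inductive step assume the lemma for $\Phi$, with permutation $\sigma$ and $2$-handles $b_i$, and put $\Phi'=\Psi\circ\Phi$ with $\Psi$ elementary. I would produce $2$-handles $b_i'$ adapted to the new one-handles $\Phi'(a_{i,w})=\Psi(\Phi(a_{i,w}))$ by carrying each $b_i$ along the defining isotopy of $\Psi$. If $\Psi$ twists a foot, inverts a handle, or slides a foot of $a_j$ (with the slide annulus routed through $a_l\setminus a_{l,z}$), the $a_{i,z}$ are untouched; after at worst sliding the transported disk over some $a_{l,z}$, without crossing any $\Phi'(a_{j,w})$, one obtains $b_i'$ meeting the belt spheres of $\Phi'(a_{i,w})$ and of $a_{\sigma(i),z}$ transversely once each and disjoint from $\Phi'(a_{j,w})$ for $j\neq i$, and $\sigma$ is unchanged. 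If $\Psi$ exchanges $a_j$ and $a_{j'}$, then $\Phi'(a_{j,w})$ and $\Phi'(a_{j',w})$ land in the local models of $a_{j'}$ and $a_j$; transporting $b_j,b_{j'}$ and replacing $\sigma$ by $\sigma\circ(j\,j')$ gives the required $b_j',b_{j'}'$. In every case the new collection is again a relative handle decomposition of $K$ of the prescribed shape with properties (i), (ii), which closes the induction.

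The step I expect to be the main obstacle is the inductive one: checking that the transverse single-point intersections in (i), (ii) and, above all, the disjointness of $\partial b_i$ from the $\Phi(a_{j,w})$ with $j\neq i$ persist when an elementary handle map is absorbed into the $b_i$. The cleanest bookkeeping is to keep, after each step, the pair $(\Phi(a_{i,w}),b_i)$ in a fixed local model of a parallel one-handle together with its canonical dual two-handle, inside which each of the four elementary handle maps acts by an explicit, easily traced move, and to use the room provided by $\rho=1/4$ to keep all foot slides off the shrunken handles $a_{l,z}$. That the final decomposition genuinely reassembles to $K$ rather than to $K$ with extra handles then follows from an Euler-characteristic count, once one knows each $(\Phi(a_{i,w}),b_i)$ is a cancelling pair, which is exactly condition (i).
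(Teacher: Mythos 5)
Your overall strategy coincides with the paper's: factor $\Phi$ into elementary handle maps via Theorem~\ref{thm:handle_slides_generate} and induct on the number of factors, treating a single elementary map in an explicit local model. The genuine problem is in your inductive step. Since an elementary handle map $\Psi$ is in general a nontrivial mapping class rel $\partial_- K$, ``carrying $b_i$ along the defining isotopy of $\Psi$'' can only mean taking $b_i'=\Psi(b_i)$, and then the resulting decomposition has $1$-handles $\Psi(a_{i,z})$ rather than the fixed sub-handles $a_{i,z}$ to which condition (ii) refers. Your claim that for a foot twist or an inversion of $a_j$ ``the $a_{i,z}$ are untouched'' is unjustified and, for the representatives you actually set up (Example~\ref{ex:disk_annulus_twists}), false: the support of a disk twist along the co-core of $a_j$ must meet $a_{j,z}$, because $a_{j,z}$ joins the two feet of $a_j$ and the co-core separates them; the involution defining an inversion interchanges the feet of $a_j$ and hence moves $a_{j,z}$; and an exchange sends $a_{j,z}$ into $a_{j'}$. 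To repair this you must either choose special representatives (e.g.\ realize the foot twist as a twist along a parallel disk pushed under the foot into $B\times[0,1]$, and involutions preserving the sub-handles setwise), or compose with a correcting ambient isotopy rel $\partial_- K$ --- absorbed into the permitted isotopy of $\Phi$ --- returning each $\Psi(a_{i,z})$ to standard position, and then verify that the transported attaching spheres still meet the belt spheres of $a_{\sigma(i),z}$ and of $\Phi'(a_{i,w})$ exactly once and miss the other $\Phi'(a_{j,w})$. That verification is the actual content of the lemma and is not supplied by ``after at worst sliding the transported disk over some $a_{l,z}$''; a slide over $a_{l,z}$ changes the intersection with precisely the belt sphere that (ii) controls. (Also, in the exchange case the new permutation is $(j\,j')\circ\sigma$, not $\sigma\circ(j\,j')$.) The paper organizes the induction so this issue never arises: it peels off the innermost factor $\Phi_n$, places its local-model decomposition between auxiliary points $z_1,z_2$, transports it by $\Phi'=\Phi_1\circ\dots\circ\Phi_{n-1}$, splices it to the inductively known decomposition between $z_0,z_1$, and cancels the intermediate handles $\Phi'(a_{i,z_1})$ against the old $2$-handles; the fixed handles $a_{i,z_0}$ are never acted on by any diffeomorphism.

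A smaller but real defect is your closing justification that the pieces ``reassemble to $K$'' by an Euler-characteristic count: a sub-compression body of $K$ containing $B\times[0,1]$ and abstractly diffeomorphic to $K$ need not have product complement (a knotted $1$-handle already fails), so cancellation plus $\chi$ does not show that the complement of your union is a collar of $\partial_+ K$. In the inductive step this is automatic once you literally transport a handle decomposition of $K$ by a diffeomorphism of $K$ (followed by an ambient isotopy); in the base case it follows from the evident position of the disks between the parallel bands $a_{i,z}$ and $a_{i,w}$ inside $a_i$, which also makes the preliminary ``drag feet so the complement is connected'' step unnecessary. Argue it that way rather than via $\chi$.
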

\begin{proof}
    By Theorem~\ref{thm:handle_slides_generate} we assume that after a suitable isotopy the diffeomorphism $\Phi$ is given as the composition of elementary handle maps $\Phi_1 \circ \dots \circ \Phi_n$ and that $a_{i,z}$ and $\Phi(a_{j,w})$ are pair-wise disjoint. It suffices to construct the $2$-handles $b_i$ with the required properties. We proceed by induction on $n$. For $n=1$ the handles are constructed in a local model. Note that the attaching sphere $Q_i$ of $b_i$ is uniquely determined by its properties up to isotopy. For the convenience of the reader, we just indicated the cores of the handles in Figure~\ref{fig:awesome_pics}.
    \begin{figure}
    \centering
    \def\svgwidth{0.99\columnwidth}
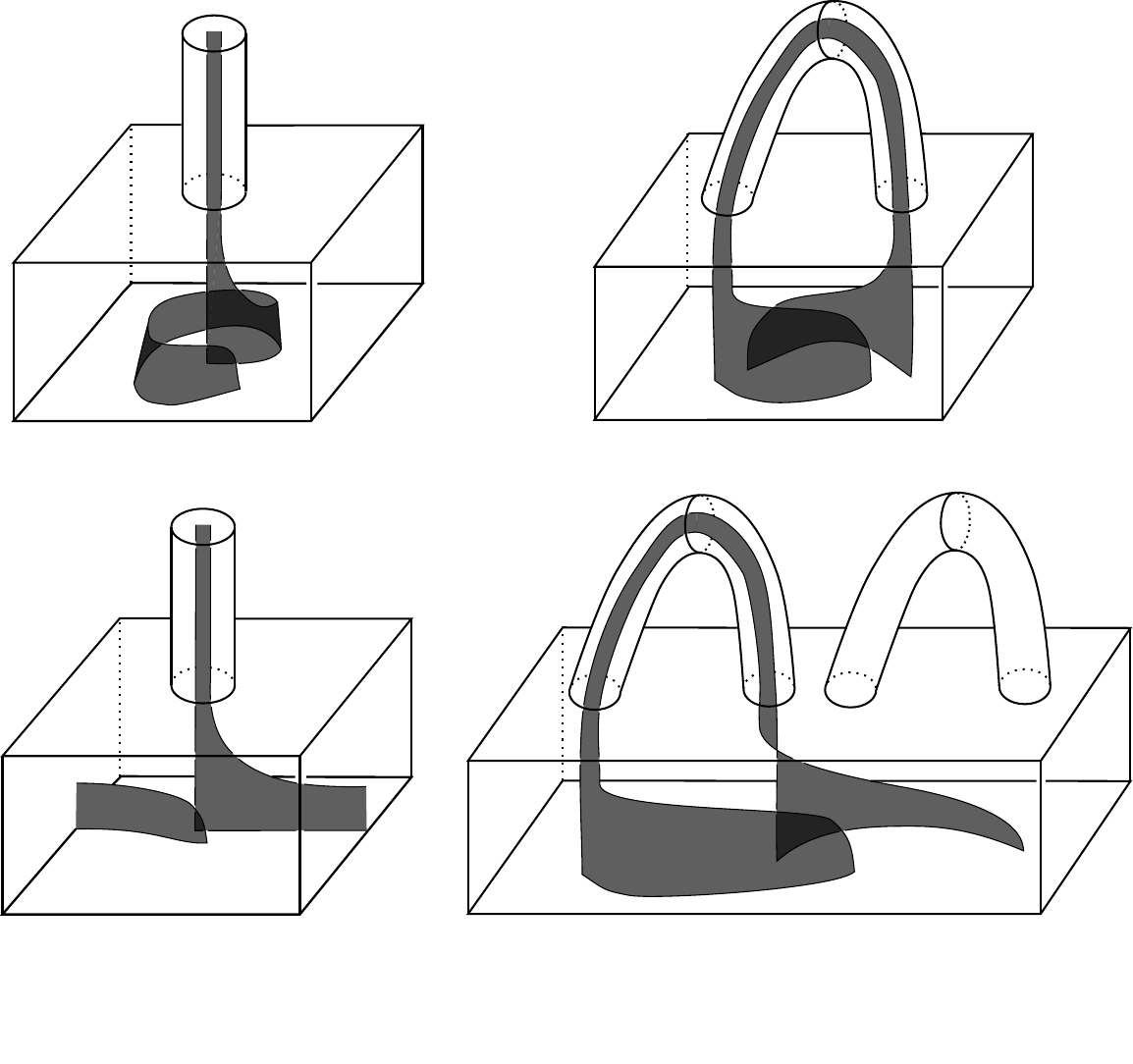\caption{\label{fig:awesome_pics}The cores of the handles $b_i$ for the four elementary handle maps. In picture $(iii)$ the right and left side of the cuboid are identified.}
    \end{figure}
    To conclude for $n \geq 2$, we assume we have already constructed $2$-handles $b'_i$ satisfying the assertion of the lemma with respect to points $z_0,z_1$ and diffeomorphism $\Phi':= \Phi_1\circ \dots \circ \Phi_{n-1}$, where again $z_j = \frac 12 e^{2\sqrt{-1}j \pi/3}$ for $j \in \Z$. Let $b''_i$ denote the $2$-handles satisfying the assertion of the lemma with respect to points $z_1,z_2$ and diffeomorphism $\Phi_n$. Applying $\Phi'$ to the second handle decomposition gives a handle decomposition with $1$-handles $\Phi'(a_{i,z_1})$ and $\Phi(a_{i,z_2})$ and $2$-handles $\Phi'(b''_i)$. Canceling the handles $\Phi(a_{i,z_2})$ and $\Phi(b'_i)$ in the second handle decomposition gives a diffeomorphism $\Psi$ of $K$, which identifies a small neighborhood of $\Phi'(a_{i,z_1})$ with a small neighborhood of $\Phi'(a_{i,z_1}) \cup Q_i'' \cup \Phi(a_{i,z_2})$, where $Q_i''$ is the core of the handle $\Phi'(b''_i)$. Without loss of generality we can assume that $\Psi$ is the identity along $Q_i' \cap \Phi'(a_{i,z_1})$, where $Q_i'$ is the core of $b_i'$. If we pull back the first handle decomposition using $\Psi$ and adding to it the second one, we obtain a handle decomposition of $K$ with $3k$ many $1$-handles $a_{i,z}$, $\Phi'(a_{i,z_1})$ and $\Phi(a_{i,z_2})$ and $2k$-many $2$-handles given by $\Psi^{-1}(\Phi'(b_i''))$ and $b_i'$. Now cancelling $b_i'$ and $\Phi'(a_{i,z_1})$ yields $2$-handles $b_i$ satisfying the required assertions. Note that after handle isotopy we have such a handle decomposition for arbitrary $z,w$ and not just for $z_0$ and $z_2$. 
\end{proof}

\begin{rem}\label{rem:icebreaker}
	The previous Lemma is constructive. The attaching sphere of the handle $b_i$ is obtained inductively from the models in Figure~\ref{fig:awesome_pics}. For the induction step, write in a model of $B$ the attaching regions of the $1$-handles $a_{i,z_0}$, $\Phi'(a_{i,z_1})$ and $\Phi(a_{i,z_2})$ and the attaching spheres of the $2$-handles $b_i'$ and $\Phi'(b_i'')$ using notation as introduced in the proof of the Lemma. Now the attaching spheres of $b_i'$ and $\Phi'(b_i'')$ possibly intersect. We remove these intersection points by changing $b_i'$ using finger moves as indicated in Figure~\ref{fig:finger_move}. The result gives a Heegaard diagram of $K$ and we cancel the handles $b_i'$ and $\Phi'(a_{i,z_1})$ to obtain the required handle decomposition.
\begin{figure}
 \centering
 \def\svgwidth{.6\columnwidth}
 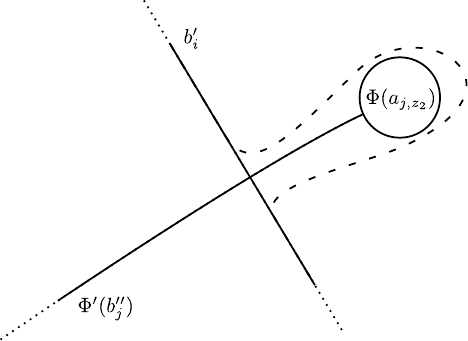
 \caption{We remove intersection points by changing the attaching sphere of $b_i'$ as indicated by the dashed line.}
 	\label{fig:finger_move}
 \end{figure}
\end{rem}
Let $Q_i \subset K$ denote the core of the handle $b_i$ and $\sigma$ the permutation obtained from Lemma~\ref{lem:icebreaker} with respect to the points $w=z_2$ and $z=z_0$ and monodromy $\Phi$. In light of~\ref{eq:monodromie_on_cores} we assume after possibly a further isotopy of $Q_i$ that
\[ \partial Q_i \cap \Phi(a_{i,z_2}) = c_{i,w_2^-}^{2\pi}\cap K_0\,,\qquad \partial Q_i \cap a_{\sigma(i),z_0} = c_{\sigma(i),w_2^+}^{0}\cap K_0\,.\]
Denote $\partial_B Q_i = \partial Q_i \cap B \times 1$. Similarly to the cores in Figure~\ref{figure:extended_core} we extend $Q_i$ 
to a disk in $W_K$, denoted $\hat Q_i$, using the decomposition~\ref{eq:W_decomposition}
\[
\hat Q_i := \partial_B Q_i \times I \cup \big(\partial_B Q_i \times I \cup Q_i\big) \times 0\,.
\]
Equipped with $\hat Q_i$ we are finally able to define the disks 
\begin{equation}\label{eq:Delta23}
  \Delta^2_{i,2} :=  \bigcup_{\theta \in [4\pi/3,2\pi]} c^\theta_{i,w^-_2} \cup \hat Q_i\,.  
\end{equation}
We emphasize that by construction the disks have boundary on $\Sigma$. Using the compression disk we define the compression bodies $K_j$, for $j=0,1,2$, as follows. It is easy to check that the normal bundle of $\Sigma$ in $W_K$ is trivial. Attach a $3$-dimensional neighborhood of $\Delta^1_{i,j}$ and $\Delta^2_{i,j}$ to $\nu_j\Sigma$, which is a small $3$-dimensional thickening of $\Sigma$ into the direction with angle $2j\pi/3$ with respect to a fixed normal vector field, for all $i=1,\dots,k$ and $j=0,1,2$. We obtain a compression body with higher genus boundary $\Sigma$ and lower genus boundary a surface $F'$. After isotopy which removes the handle of $\Sigma$ in angle $2(j-1)\pi/3$, the surface $F'$ is not contained in the $(2(j-1)\pi/3)$-page. Hence after a further isotopy, $F'$ is only contained in the $(2j\pi/3)$-page. Since the $2$-handle with core $\Delta^2_{i,j}$ cancels one of the two remaining handles of $\Sigma$ there exists a final isotopy $F'$  to $F_{2j\pi/3}$ as required.

\subsection{Trisection of \texorpdfstring{$W_K$}{WK} and  \texorpdfstring{$W$}{W}} 
Let $c \subset D^2$ denote the circle of radius $1/2$. We define a $2$-complex $Z\subset W_K$ 
    \[Z := B\cup \bigcup_{i,j} \Delta^2_{i,j}  \cup \bigcup_{z \in c \cap D_\rho(z_j)} c^{ 2 j\pi/3}_{i,z} \,.\]
We claim that $W_K$ collapses onto $Z$, i.e.\ there exists an isotopy of $W_K$ which takes the complement of a small regular neighborhood of the boundary onto a small regular neighborhood of $Z$. This is done by an isotopy, which preserves the pages. More precisely, for the $0$-page, this  is the isotopy onto the disks $\hat Q_i$, which exists since these are cores of a handle decomposition. We identify $\hat Q_i$ with a quadrilateral $[-1,1] \times [0,1]$ such that $[-1,1] \times 0$ is identified with $c_{i,w_2^-}^{2\pi}$ and $[-1,1] \times 1$ is identified with $c_{\sigma(i),w^+_2}^{0}$ up to changing the orientation. We extend to pages $K_\theta$ for $\theta<\epsilon$ or $\theta > 2\pi-\epsilon$ by concatenating with the isotopy which takes $Q_i$ to $[-1,1] \times 0$ or $[-1,1]\times 1$ respectively and finally extend this to all pages. 

Secondly, we claim that a regular neighborhood of $\bigcup_j K_j$ also collapses onto $Z$. This is true because $\bigcup_j K_j$ collapses onto the complex
\[\Sigma \cup \bigcup_{i,j} \Delta^1_{i,j} \cup \Delta^2_{i,j}\,,\]
and by the fact that a regular neighborhood of this complex is isotopic to a regular neighborhood of $Z$. 

In conclusion, we find an isotopy of $W_K$ which takes a regular neighborhood of $\bigcup_j K_j$ to $W_K\setminus \nu F_\Phi$ as required. This completes step $(3)$.

As explained in the beginning, we complete the compression bodies $K_j$ using the fibers $H_\theta$ of $W_H$ to get
\begin{equation}\label{eq:L}
  L_j := K_j \cup_{F_j} H_j\,,
\end{equation}
with $F_j = F_{2j\pi/3}$ and $H_j=H_{2j\pi/3}$ for $j=0,1,2$. Note that a regular neighborhood of $\bigcup_j L_j$ is given by $W_K$ union a thickening of the three fibers $H_j$. The complement in $W$ of this regular neighborhood consists of three copies of the form $H \times [0,1]$, which is a handle body in the case when $H$ is a handle body and thus $\partial W=\emptyset$ and is as in the definition for a relative trisection in the case $\partial W \neq \emptyset$. This gives the stated (relative) trisection of $W$. 

It remains to compute the statements about the trisection genera. Assume that $\partial W=\emptyset$. The genus of the trisection constructed above is $(\ell+2k,\ell)$. Now we see that $\ell=g$, $\chi(B)-2k=\chi(F)=2-2g$ and $\chi(B)=2|B|-2g(B)$ and thus we can also express the trisection genus as $(3g-2g(B),g)$ as stated in the theorem.

Now suppose that $\partial W \neq \emptyset$. We have that $W$ admits a structure of an open book $\Ob(M,\Phi)$, where $M$ is a sutured $3$-manifold with suture $\partial M  = B \cup_C P$. The number of boundary components $b$ is equal for $\Sigma, F, P$ and $B$ and equals the number of components of $C$. The above-described algorithm gives a relative $(g+2k,g;p,b)$-trisection. We compute $\chi(B)-2k=\chi(F)$ and $\chi(B)=2|B|-2g(B)-b$ as well as $\chi(F)=2-2g-b$. We conclude the formula for the genus of the trisection. 

\begin{figure}
\centering
\def\svgwidth{0.99\columnwidth}
\begingroup%
  \makeatletter%
  \providecommand\color[2][]{%
    \errmessage{(Inkscape) Color is used for the text in Inkscape, but the package 'color.sty' is not loaded}%
    \renewcommand\color[2][]{}%
  }%
  \providecommand\transparent[1]{%
    \errmessage{(Inkscape) Transparency is used (non-zero) for the text in Inkscape, but the package 'transparent.sty' is not loaded}%
    \renewcommand\transparent[1]{}%
  }%
  \providecommand\rotatebox[2]{#2}%
  \newcommand*\fsize{\dimexpr\f@size pt\relax}%
  \newcommand*\lineheight[1]{\fontsize{\fsize}{#1\fsize}\selectfont}%
  \ifx\svgwidth\undefined%
    \setlength{\unitlength}{629.35509389bp}%
    \ifx\svgscale\undefined%
      \relax%
    \else%
      \setlength{\unitlength}{\unitlength * \real{\svgscale}}%
    \fi%
  \else%
    \setlength{\unitlength}{\svgwidth}%
  \fi%
  \global\let\svgwidth\undefined%
  \global\let\svgscale\undefined%
  \makeatother%
  \begin{picture}(1,0.47140827)%
    \lineheight{1}%
    \setlength\tabcolsep{0pt}%
    \put(0,0){\includegraphics[width=\unitlength,page=1]{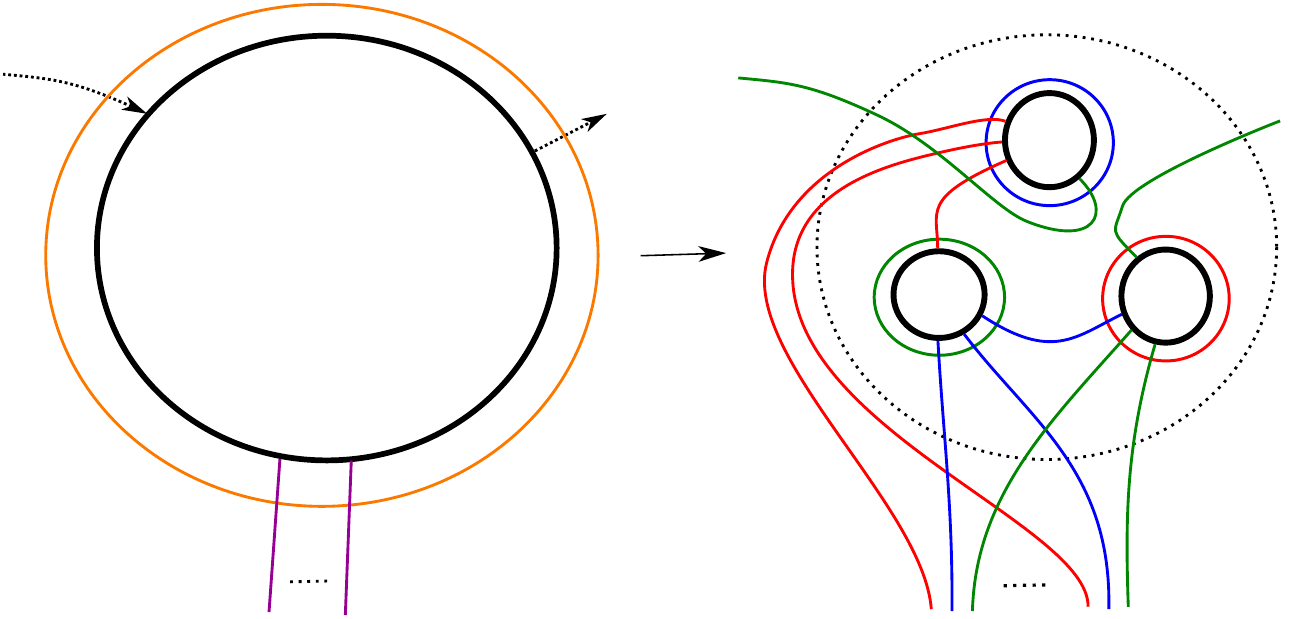}}%
    \put(0.22233217,0.2751708){\color[rgb]{0,0,0}\makebox(0,0)[lt]{\lineheight{1.25}\smash{\begin{tabular}[t]{l}$D_i^+$\end{tabular}}}}%
    \put(0.69605801,0.24121278){\color[rgb]{0,0,0}\makebox(0,0)[lt]{\lineheight{1.25}\smash{\begin{tabular}[t]{l}$D_{i,1}^+$\end{tabular}}}}%
    \put(0.87083935,0.23935233){\color[rgb]{0,0,0}\makebox(0,0)[lt]{\lineheight{1.25}\smash{\begin{tabular}[t]{l}$D_{i,2}^+$\end{tabular}}}}%
    \put(0.78060499,0.3602851){\color[rgb]{0,0,0}\makebox(0,0)[lt]{\lineheight{1.25}\smash{\begin{tabular}[t]{l}$D_{i,0}^+$\end{tabular}}}}%
    \put(0.38791506,0.43605239){\color[rgb]{0.98823529,0.4745098,0}\makebox(0,0)[lt]{\lineheight{1.25}\smash{\begin{tabular}[t]{l}$\epsilon$\end{tabular}}}}%
    \put(0.28279661,0.04534655){\color[rgb]{0.59215686,0,0.58039216}\makebox(0,0)[lt]{\lineheight{1.25}\smash{\begin{tabular}[t]{l}$\delta$\end{tabular}}}}%
  \end{picture}%
\endgroup%

\caption{Left: A Heegaard diagram of the page. The dotted arrow indicates the trace of an elementary handle slide. Right: A trisection diagram of the corresponding open book with common color coding $\alpha$, $\beta$, $\gamma$-curves in red, blue and green respectively.}
	\label{fig:trisect_algo}
\end{figure}

\subsection{Trisection diagram} Since the proof above explicitly constructs the trisection surface $\Sigma$ and handle bodies (resp.\ compression bodies in the case of a relative trisection) given in Equation~\eqref{eq:L} by attaching $2$- and $3$-handles to $\Sigma$, the trisection diagram is directly obtained from a Heegaard diagram of the page by recovering the attaching spheres. We assume without loss of generality that the Heegaard diagram $(F,\delta,\epsilon)$ of $M$ is given in the following  form
\begin{itemize}
     \item each connected component of $\partial_- K$ is represented by a copy of $S^2$ equipped with attaching regions for $1$-handles, and
     \item additional pairs of attaching regions corresponding to the $1$-handles of $K$. The $\e$-curves are required to be parallel to these regions. 
\end{itemize}
First, we replace each attaching region $D_i^\pm$ which has a parallel $\e$-curve with three new attaching regions $D_{i,j}^\pm$ for $j=0,1,2$ (cf.\ Figure~\ref{fig:trisect_algo}). This gives a model of the trisection surface $\Sigma$. To describe the curves $\alpha, \beta$ and $\gamma$ of the trisection diagram $(\Sigma,\alpha,\beta,\gamma)$, we abbreviate $\theta_0=\alpha$, $\theta_1=\beta$ and $\theta_2=\gamma$. For each $j=0,1,2$ and $i=1,\dots,k$ we have a $\theta_j$-curve inside $D^+_i$ running parallel around the $D^+_{i,{j-1}}$-region, corresponding to $\partial \Delta^1_{i,j}$, where we have used the index $j$ modulo $3$. Secondly, we consider the curves corresponding to $\partial \Delta^2_{i,j}$. For each $j=0,1$ and $i=1,\dots,k$ we have a $\theta_j$-curve given by arcs of the circle with radius $\frac 12$ inside $D^\pm_i$ connecting $D^\pm_{i,j}$ to $D^\pm_{i,j+1}$. For $j=2$ we have $\theta_2$-curve given by the attaching sphere of the $2$-handle $b_i$ of Lemma~\ref{lem:icebreaker}. More precisely after isotopy, we assume that the monodromy is given as a product of elementary handle maps and we obtain a $\theta_2$-curve by concatenating the corresponding curves given in Figure~\ref{fig:buildingblocks} and removing potential intersection points using finger moves as explained in Lemma~\ref{lem:icebreaker}. Finally for each $j=0,1,2$ and $\delta$-curve we get a $\theta_j$-curve, which runs parallel to the $\delta$-curve outside $D^\pm_i$ and if the $\delta$-curve passes through $D^\pm_i$ the corresponding $\theta_j$-curve is extended inside $D^\pm_i$ with two arcs to $D^\pm_{i,j}$ which agree under the identification of $D^+_i$ with $D^-_i$ forming a closed curve. If $j=2$ there are potential intersection points with other $\theta_j$-curves previously constructed. We remove these using finger moves as explained in Lemma~\ref{lem:icebreaker}, see also Figure~\ref{fig:trisect_algo}. We remark that the obtained diagram depends on certain choices. However, it is not hard to see that all these trisection diagrams are handle slide equivalent, and thus all represent the same trisection. This completes the proof of Theorem~\ref{thm:A}(1) and~(3).

\begin{figure}
\begin{minipage}{.35\textwidth}
\includegraphics[width=.5\textwidth]{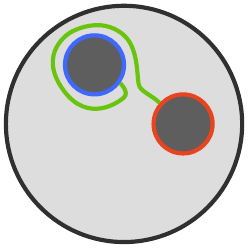}
\end{minipage}
\begin{minipage}{.35\textwidth}
    \includegraphics[width=\textwidth]{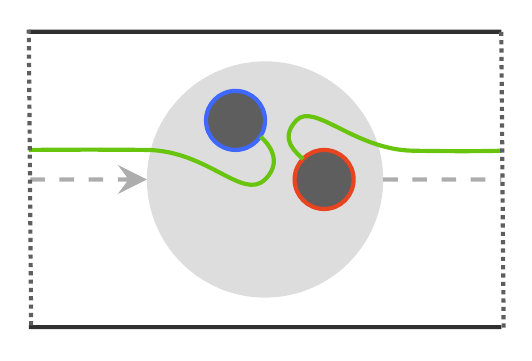}
\end{minipage}
\begin{minipage}{.35\textwidth}
\includegraphics[width=.5\textwidth]{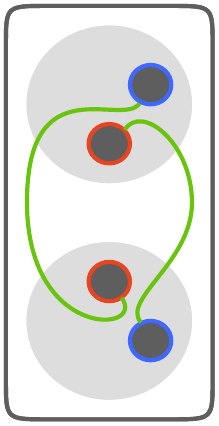}
\end{minipage}
\begin{minipage}{.35\textwidth}
\includegraphics[width=\textwidth]{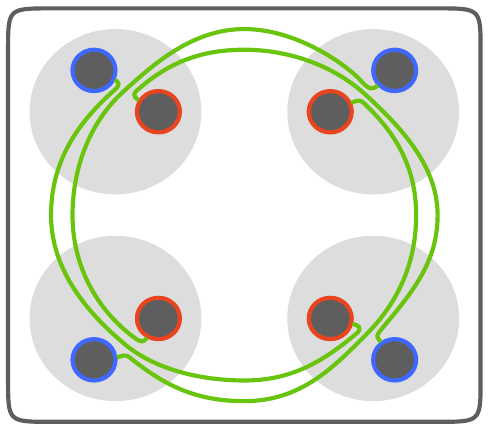}
\end{minipage}
    \caption{The $\gamma$-curves obtained for elementary handle maps. The dotted lines are identified in the picture top right. The black curves bound the disks/annuli along which the (half)-twist is performed. In the pictures below the attaching regions are identified via reflection along a horizontal line. } 
    \label{fig:buildingblocks}
\end{figure}

\subsection{Empty binding}
The case when the binding is empty, i.e.\ when $W$ is an $S^1$-bundle over a $3$-manifold, works similarly. We outline the arguments in the following. Let $W$ be an $S^1$-bundle over a $3$-manifold $M$ with monodromy $\Phi$ which preserves a Heegaard splitting $M=K\cup_F H$. We have the corresponding splitting of $W$ into mapping cylinders
\[W_K := K_\Phi\,, \qquad W_H := H_\Phi\,,\]
which again intersect along the mapping cylinder $F_\Phi$. We fix a handle decomposition of $K$ with exactly one $0$-handle
\[ K=D^3 \cup a_1 \cup \dots \cup a_k\,.\]
We assume without loss of generality that $\Phi$ is the identity on the $0$-handle. We have a corresponding decomposition of $W_K$ given by
\[
W_K = D^3 \times S^1 \cup \left(a_1 \cup \dots \cup a_k \right)\times [0,2\pi]/\sim\,.
\]
We fix an identification of $D^3$ with $S^2 \times [0,1]/\! \sim$ where $(p,0)\sim(q,0)$ for all $p,q \in S^2$ via polar coordinates and another identification $D^3 \cong D^2 \times [-1,1]$. Similarly as above for $z \in D^2$, $i=1,\ldots,k$ and $\theta \in [0,2\pi]$ we define arcs $c^\theta_{i,z}$ and $\bar c_{j,z}$. More precisely we identify $a_i$ with $[-1,1] \times D^2$ and define $p_i^\pm \in S^2$  such that $(p_i^\pm,1)$ is identified with $(\pm 1,z)$, using $D^3 \cong S^2 \times [0,1]/\sim$
\[
c^\theta_{i,z} := \big(\{p_i^-,p_i^+\} \times [1/2,1] \times \theta \big)\cup \big([-1,1] \times z \times \theta\big) \subset \big(D^3 \cup a_i\big) \times \theta\,,
\]
and using $D^3 \cong D^2 \times [-1,1]$ for $j=0,1,2$ 
\[
\bar c_{j,z} := \bigcup_{t\in[0,1]} z \times (t-1/2) \times 2(j+t)\pi/3 \subset D^3 \times S^1\,.
\]
Now define using again $D^3 \cong S^2 \times [0,1]/\sim$ 
\[
B:=\bigcup_{j=0,1,2} S^2 \times 1/2 \times  2j\pi/3  \subset D^3 \times S^1 \,.
\]
Denote the points $p_\pm \in S^2$ such that $(p_\pm,1/2)$ is identified with $(0,\pm 1/2)$ under the identification of  $S^2 \times [0,1]/\sim$ and $D^2 \times [-1,1]$ with $D^3$. Assume without loss of generality that $D_\rho(p_\pm) \times 1/2$ is identified with $D_\rho(0) \times \pm 1$. Further, we assume  that there are no attaching regions near $(p_\pm,1)$. Now define the trisection surface 
\[
\Sigma:= B \setminus \bigcup_j\bigcup_{z \in D_\rho(0)} \partial \bar c_{j,z} \cup \bigcup_{z \in \partial D_\rho(0)} \bar c_{j,z} \cup  \bigcup_{i,j} \bigcup_{z \in D_\rho(0)} \partial c^{2j\pi/3}_{i,z}  \cup \bigcup_{i,j} \bigcup_{z \in \partial D_\rho(0)} c^{2j\pi/3}_{i,z}\,.
\]
Hence $\Sigma$ is given by three disjoint copies of $S^2$ put in different angles and joined together with three $1$-handles with core $\bar c_{j,0}$ and surgered with three $1$-handles for each $1$-handle of $K$. We now define the compression disks. First, we have three disks corresponding to the cocore of the $\bar c_{j,z}$ handles for $j=0,1,2$
\[\bar \Delta_j := D_\rho(0) \times 0 \times (2j+1)\pi/3\,.\]
Then we have the cocore disks $\Delta^1_{i,j}$ of the $1$-handles with core $c^{2j\pi/3}_{i,0}$ given similarly as in Equation~\eqref{eq:Delta1}. We define the final family 
$\Delta^2_{i,j}$ as depicted in Figure~\ref{fig:wurmloch} in the case of trivial monodromy. Note that $\Delta^2_{i,j}$ has boundary on $\Sigma$ and in particular one boundary component is given by $\bar c_{j,z}$ for some particular $z \in D_\rho(0)$. If the monodromy $\Phi$ is non-trivial, we assume without loss of generality that $\Phi$ is the identity on $S^2 \times 1/2$ and correct $\Delta_{i,3}$ using disks $Q_i$ defined similarly as in Equation~\ref{eq:Delta23}. 

\begin{figure}[htbp] 
\centering
\def\svgwidth{0.99\columnwidth}
\begingroup%
  \makeatletter%
  \providecommand\color[2][]{%
    \errmessage{(Inkscape) Color is used for the text in Inkscape, but the package 'color.sty' is not loaded}%
    \renewcommand\color[2][]{}%
  }%
  \providecommand\transparent[1]{%
    \errmessage{(Inkscape) Transparency is used (non-zero) for the text in Inkscape, but the package 'transparent.sty' is not loaded}%
    \renewcommand\transparent[1]{}%
  }%
  \providecommand\rotatebox[2]{#2}%
  \newcommand*\fsize{\dimexpr\f@size pt\relax}%
  \newcommand*\lineheight[1]{\fontsize{\fsize}{#1\fsize}\selectfont}%
  \ifx\svgwidth\undefined%
    \setlength{\unitlength}{583.96269464bp}%
    \ifx\svgscale\undefined%
      \relax%
    \else%
      \setlength{\unitlength}{\unitlength * \real{\svgscale}}%
    \fi%
  \else%
    \setlength{\unitlength}{\svgwidth}%
  \fi%
  \global\let\svgwidth\undefined%
  \global\let\svgscale\undefined%
  \makeatother%
  \begin{picture}(1,0.3371381)%
    \lineheight{1}%
    \setlength\tabcolsep{0pt}%
    \put(0,0){\includegraphics[width=\unitlength,page=1]{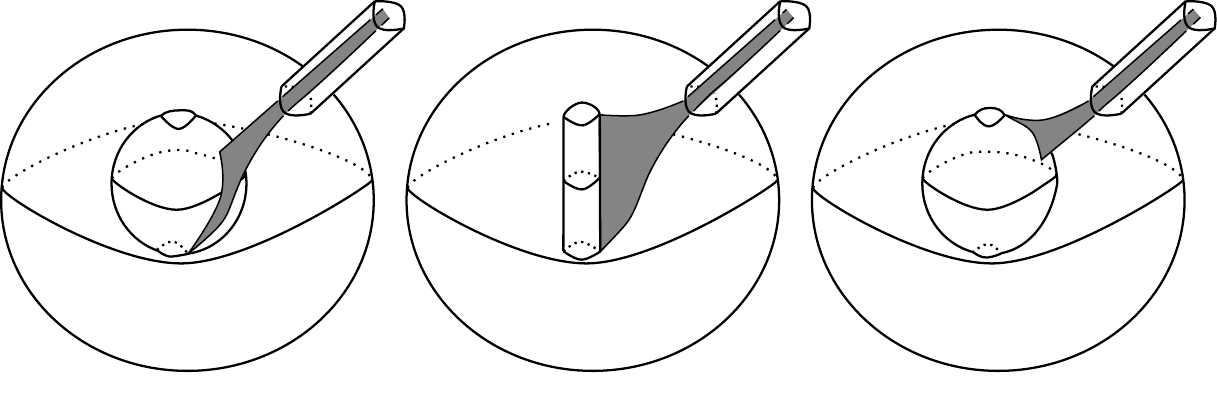}}%
    \put(0.24316511,0.31191277){\color[rgb]{0,0,0}\makebox(0,0)[lt]{\lineheight{0.1}\smash{\begin{tabular}[t]{l}$a_i$\end{tabular}}}}%
    \put(0.57538314,0.31240012){\color[rgb]{0,0,0}\makebox(0,0)[lt]{\lineheight{0.1}\smash{\begin{tabular}[t]{l}$a_i$\end{tabular}}}}%
    \put(0.9138669,0.31103079){\color[rgb]{0,0,0}\makebox(0,0)[lt]{\lineheight{0.1}\smash{\begin{tabular}[t]{l}$a_i$\end{tabular}}}}%
    \put(0.13843823,0.00441734){\color[rgb]{0,0,0}\makebox(0,0)[lt]{\lineheight{0.1}\smash{\begin{tabular}[t]{l}$(i)$\end{tabular}}}}%
    \put(0.47621788,0.00425434){\color[rgb]{0,0,0}\makebox(0,0)[lt]{\lineheight{0.1}\smash{\begin{tabular}[t]{l}$(ii)$\end{tabular}}}}%
    \put(0.80601031,0.00437113){\color[rgb]{0,0,0}\makebox(0,0)[lt]{\lineheight{0.1}\smash{\begin{tabular}[t]{l}$(iii)$\end{tabular}}}}%
    \put(0,0){\includegraphics[width=\unitlength,page=2]{Wurmloch.pdf}}%
  \end{picture}%
\endgroup%

\caption{The compression disk $\Delta^2_{i,j}$. The pictures $(i)$ and $(iii)$ show the parts in the fibre over $2j\pi/3$ and $2(j+1)\pi/3$ respectively. The picture $(ii)$ is a projection of the part which lies in the fibers over the interval $(2j\pi/3,2(j+1)\pi/3)$. Note that one boundary component of $\Delta^2_{i,j}$ is given by $\bar c_{j,z}$}
	\label{fig:wurmloch}
\end{figure}

As before we define compression bodies $K_j$ using these compression disks. Then we see that $\partial_+ K_j=\Sigma$ and $\partial_- K_j=F_{2j\pi/3}$. An argument similar to the open book case shows that the $K_j$ gives a relative $(3k+1,1;\ell+1,0)$ trisection of $W_K$. Next, we add fibers $H_{2j\pi/3}$ to obtain handle bodies (resp. compression bodies) $L_j$ which yield a trisection of $W$ of genus $(3k+1,\ell+1)$ (resp. $(3k+1,\ell+1;p,0)$ in the case with boundary). The same computation as above yields the claimed formulas for the trisection genera given in the theorem. This completes the proof of Theorem~\ref{thm:A}(2) and~(4).

\section{Applications and Examples}\label{sec:ex}

In this section, we will discuss examples of $4$-manifolds with open book decompositions and apply our algorithm from Theorem~\ref{thm:A} to create natural trisection diagrams on these $4$-manifolds. All diagrams in this section are drawn with respect to the color code that the $\alpha$, $\beta$, and $\gamma$-curves are red, blue and green respectively. We start with the standard open books on $D^4$ and~$S^4$.  

\begin{ex} \label{ex:standOB}
We defining the \textit{standard} open book on the closed unit $2$-disk $D^2$ by choosing the binding to be $B=\{0\}\subset D^2$. Then the fibration is given by the angle in polar coordinates
\begin{equation*}
    p_{st}=\theta\colon D^2\setminus\{0\}\rightarrow S^1.
\end{equation*}
The pages of these open books are straight lines from the origin to the boundary $\partial D^2$, i.e.\ line segments $I_\theta$ with constant angle coordinate $\{\theta=\textrm{const}\}$. The monodromy is trivial. For $n\geq 3$ we define the \textit{standard} open book on the closed $n$-disk $D^n$ by crossing the standard open book on $D^2$ with an $(n-2)$-disk. For that, we write $D^n$ as
$D^{n-2}\times D^2$ and define the binding to be $B=D^{n-2}\times\{0\}$. Then the fibration is given by the angular coordinate in the $D^2$-factor, with pages $D^{n-2}\times I_\theta$. The suture on the boundary of the pages is given by an equatorial sphere, i.e.~$C=\partial D^{n-2}\times\{1/2\}$. Thus, abstractly this open book is given by $\operatorname{Ob}(D^{n-1},D^{n-1}_+\cup_{S^{n-2}} D^{n-1}_-,\operatorname{Id})$. 

This induces also the \textit{standard} open book on the boundary sphere $S^{n-1}$, with binding $\partial D^{n-2}=S^{n-3}$, pages $D^{n-2}$ and trivial monodromy. Abstractly this is presented by $\operatorname{Ob}(D^{n-2},\operatorname{Id})$. See Figure~\ref{fig:ob_ball} for an example.

Applying the algorithm from Theorem~\ref{thm:A} to $S^4=\operatorname{Ob}(D^{3},\operatorname{Id})$ readily yields the standard genus $0$-trisection diagram of $S^4$ with trisection surface $S^2$, since the page has no $1$- and $2$-handles. In the same way, $D^4=\operatorname{Ob}(D^{3},D^{3}_+\cup_{S^2} D^{3}_-,\operatorname{Id})$ yields the standard genus-$0$ trisection diagram of $D^4$ with trisection surface $D^2$.
\end{ex}

\begin{figure}[htbp] 
\centering
\def\svgwidth{0,7\columnwidth}
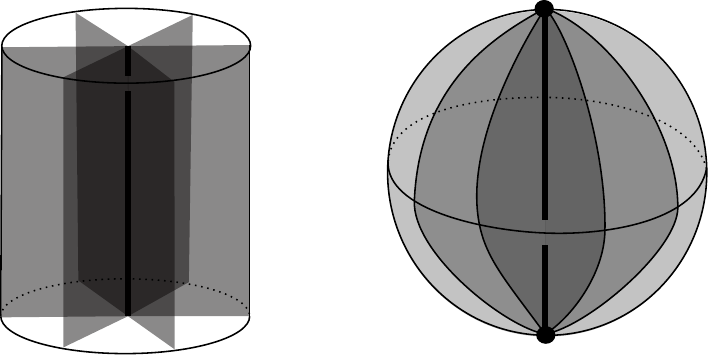
\caption{The standard open book on $D^n$ that induces the standard open book on its boundary $S^{n-1}$.}
	\label{fig:ob_ball}
\end{figure}

\subsection{Surface bundles over the 2-sphere}\label{sec:4dimOB}
Next, we describe simple open books on surface bundles over $S^2$. Trisections of surface bundles over surfaces were also studied before in~\cite{CGP18,CGP18b,CO19,Wi20}. Let $\pi\colon W^4\rightarrow S^2$ be an $F^2$-bundle over $S^2$, for a closed surface $F$. If we split the base $S^2$ into upper and lower hemispheres we get two trivial $F$-bundles over $D^2$. The gluing maps of these bundles are parametrized by $\pi_1(\operatorname{Diff}(F),\operatorname{Id}_F)$. The latter group is known to be isomorphic to $\Z_2$ for $S^2$, to $\Z^2$ for $T^2$, and vanishes for all other closed surfaces~\cite{EE67}. Then the composition
\begin{equation*}
    p\colon W\setminus \pi^{-1}(S^0)\overset{\pi}{\longrightarrow} S^2\setminus S^0\overset{p_{st}}{\longrightarrow} S^1
\end{equation*}
is an open book on $W$ with binding $B=\pi^{-1}(S^0)$, page $M=\pi^{-1}(I)=F\times I$ and the monodromy is given by the element in $\pi_1(\operatorname{Diff}(F),\operatorname{Id}_F)$.

We apply our algorithm from Theorem~\ref{thm:A} to these open books to construct trisection diagrams. First, we describe a Heegaard splitting of the page $F\times I$. For that, we choose a handle decomposition of $F$ with a single $0$-handle, $2g$ $1$-handles (where $g$ is the genus of $F$), and a single $2$-handle. By crossing this handle decomposition with $I$ we get a relative handle decomposition of $F\times I$ with a single $1$-handle, $2g$ $2$-handles, and a single $3$-handle, see Figure~\ref{fig:T2xS2}(a). A collar neighborhood of the boundary together with the $1$-handle describes a compression body $K$, while its complement is a handle body $H$. Their intersection is the Heegaard surface (see Figure~\ref{fig:T2xS2}(b)) given by two copies of $F$ joined via the $1$-handle, i.e.~$F\#F$. To get the trisection surface (if the monodromy is trivial) we replace that $1$-handle with three $1$-handles and add the $(\alpha,\beta,\gamma)$-curves as shown in Figure~\ref{fig:T2xS2}(c). The case of $S^2\times S^2$ is shown in Figure~\ref{fig:S2xS2}(a).

\begin{figure}
\centering
\def\svgwidth{0.9\columnwidth}
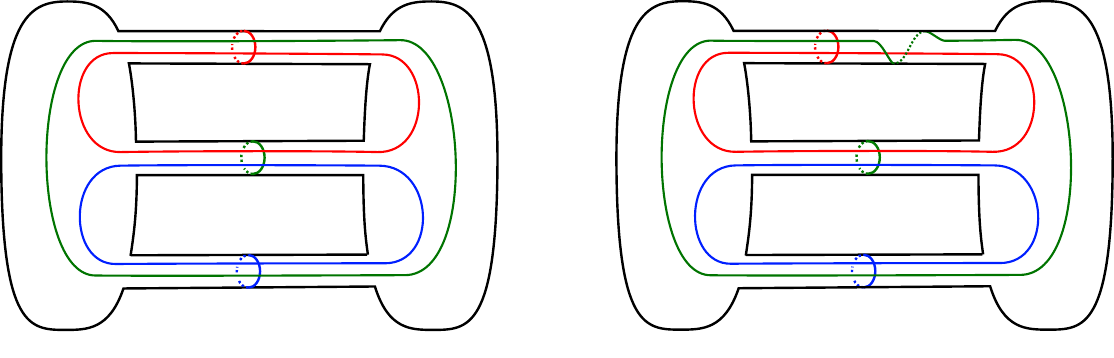
\caption{Trisection diagrams coming from the standard open books of $S^2\times S^2$ in (a) and $S^2\tilde\times S^2$ in (b).}
	\label{fig:S2xS2}
\end{figure}

\begin{figure}
\centering
\def\svgwidth{0.9\columnwidth}
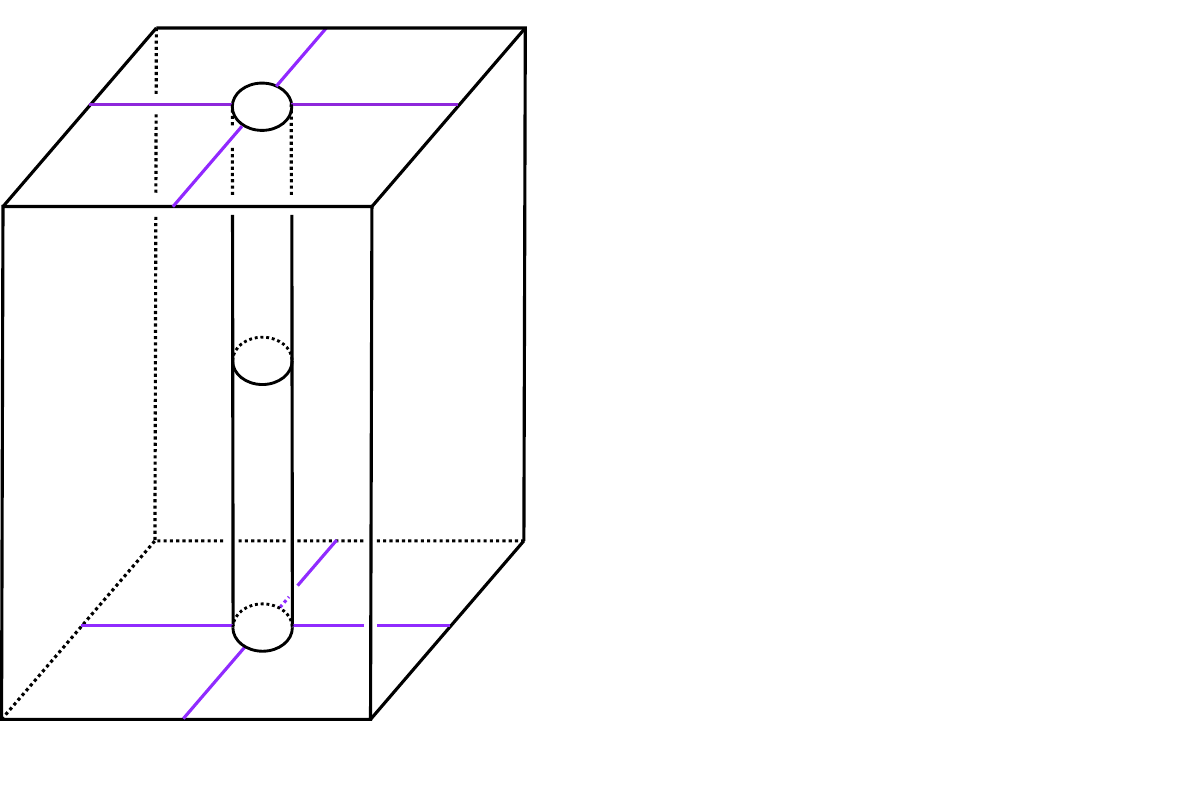
\caption{(a) A handle decomposition of $T^2\times I$. (b) The corresponding Heegaard diagram of $T^2\times I$. (c) The induced trisection diagram of $T^2\times S^2$.}
	\label{fig:T2xS2}
\end{figure}

If the monodromy is non-trivial, we need to modify the $\gamma$-curves. We start by describing the monodromy more concretely. Let $f_t\colon F\rightarrow F$, for $t\in [0,1]$, be a family of diffeomorphisms with $f_0=f_1=\operatorname{Id}_F$. We can see $f_t$ as an element in $\pi_1(\operatorname{Diff}(F),\operatorname{Id}_F)$. Then the monodromy of the corresponding open book is given by 
\begin{align*}
    \Phi\colon F\times I &\longrightarrow F\times I\\
    (p,t)&\longmapsto(f_t(p),t).
\end{align*}
For $F=S^2$ the only non-trivial monodromy is given by a sphere twist $\Phi$ along the central $S^2$ in the page $S^2\times I$. It preserves the Heegaard splitting constructed above and acts as a disk twist along the co-core of the $1$-handle. This shows that the monodromy is given by a single elementary handle map of twisting a foot of a handle. By the algorithm from Theorem~\ref{thm:A} given in the construction of the $2$-handle detailed in the proof of Lemma~\ref{lem:icebreaker} with clarification in Remark~\ref{rem:icebreaker} we  get the $\gamma$ curve from the model $(i)$ from~\ref{fig:awesome_pics}. Thus we get the trisection diagram of $S^2\tilde\times S^2$, the twisted $S^2$-bundle over $S^2$, as shown in Figure~\ref{fig:S2xS2}(b).

The only other non-trivial bundles over $S^2$ are those with fiber $T^2$. Here we know that $\pi_1(\operatorname{Diff}(T^2),\operatorname{Id})$ is isomorphic to $\Z^2$ where $(p,q)\in\Z^2$ corresponds to
\begin{align*}
    f_t\colon T^2 &\longrightarrow T^2 \\
    (\theta_1,\theta_2)&\longmapsto (\theta_1,\theta_2)+t(p,q)
\end{align*}
with $T^2=\R^2/\Z^2$. By choosing a direction orthogonal to $(p,q)$ we see that this bundle actually splits as $S^1$ times the $S^1$-bundle over $S^2$ with Euler number $d$, where $d$ is the least common denominator of $p$ and $q$. After isototpy of $f_t$ the Heegaard splitting constructed above is invariant and $f_t$ restricted to the compression body $K$ is given by sliding one foot of the $1$-handle along the $(p,q)$-curve. Applying the algorithm gives a trisection diagram of the corresponding $T^2$-bundle over $S^2$, see Figure~\ref{fig:twistedT2} for an example.  
\begin{figure} 
\centering
\def\svgwidth{0.8\columnwidth}
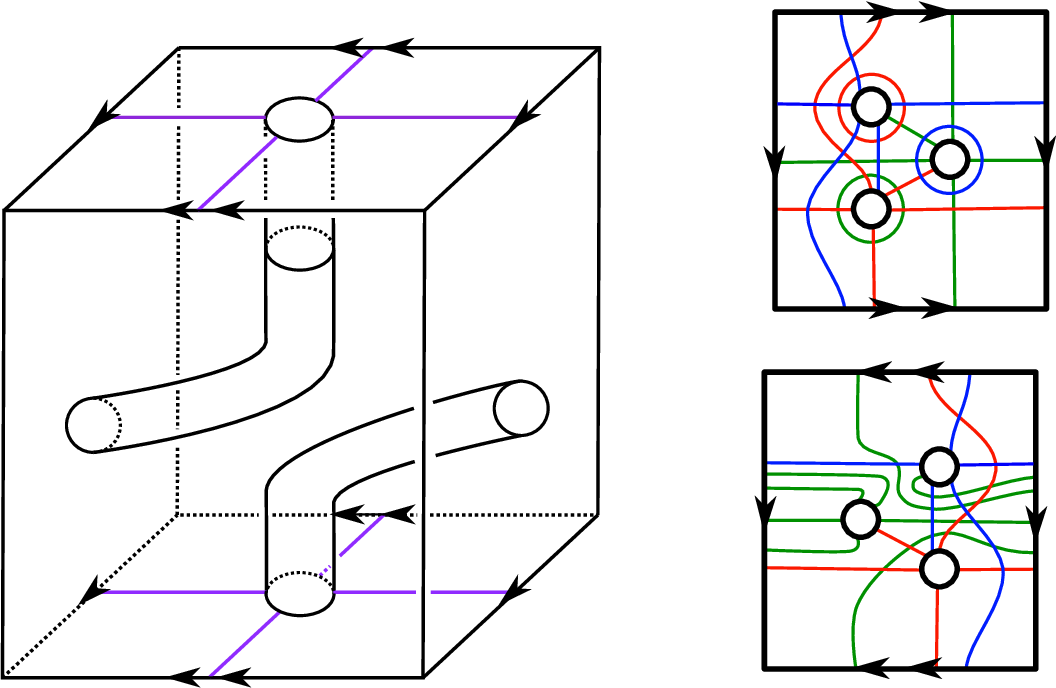
\caption{A trisection diagram of $S^1 \times S^3$ obtained as the $T^2$-bundle over $S^2$ with monodromy given by a twist in $(1,0)$-direction. (a) shows the image of the $1$-handle under the monodromy. (b) is the corresponding trisection diagram. After performing $2$-handle slides we see that the diagram is a stabilization of the standard genus-$1$ diagram of $S^1\times S^3$.}
	\label{fig:twistedT2}
\end{figure}
Thus we have constructed $(2g(F)+2,2g(F))$-trisections on all $F$-bundles over $S^2$. These trisections are minimal since the lower bounds coming from algebraic topology are sharp. 

The same construction also works in the case when $F$ has boundary. We discuss the case of $D^2$-bundles over $S^2$. Then the page is $D^2\times I$ with suture $(D^2\times \partial I) \cup (\partial D^2\times I)$. A Heegaard splitting relative to $D^2\times \partial I$ is obtained by connecting both copies of $D^2\times \partial I$ with a single $1$-handle and no $2$- or $3$-handles, shown in Figure~\ref{fig:disk_bundles}(a). To describe the monodromy we see that $\pi_1(\operatorname{Diff}(D^2))$ is isomorphic to $\Z$ generated by a full rotation. Thus the monodromy of the page $D^2\times I$ is given by an $e$-fold disk twist along the central $D^2\times\{1/2\}$ preserving the above Heegaard splitting. We note that the boundary induces the standard open book of $L(e,1)$ with annulus page and an $e$-fold Dehn twist along its core. From that description, we also see that $e$ represents the Euler class of the bundle. With the same argument as in Figure~\ref{fig:awesome_pics}(i) our algorithm yields the trisection diagram shown in Figure~\ref{fig:disk_bundles}(c).

\begin{figure}[htbp] 
\centering
\def\svgwidth{0.84\columnwidth}
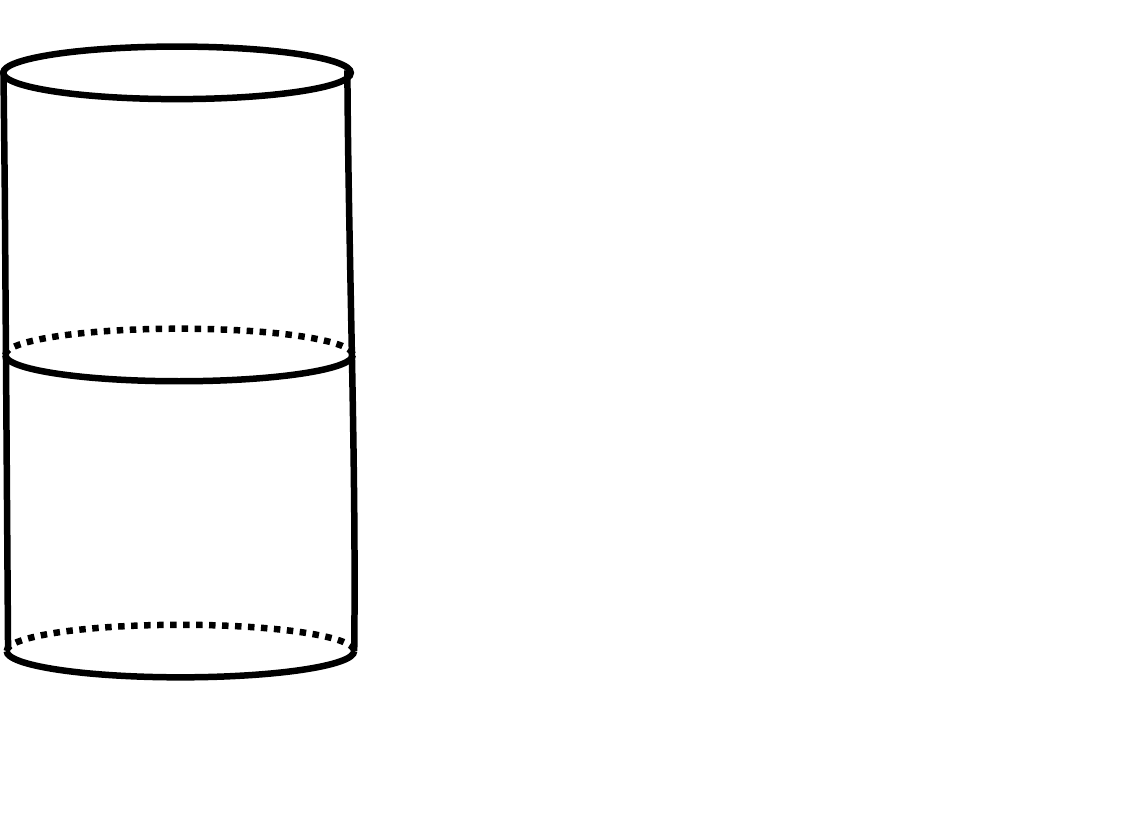
\caption{(a) The page of a disk bundle over $S^2$ is $D^2\times I$, where the suture is given by $D^2\times \partial I \cup \partial D^2\times I$. The monodromy is an $e$-fold disk twist along the shaded central disk. Its Heegaard diagram is shown in (b). From that, we obtain a trisection diagram. In (c) a trisection diagram of the disk bundle over $S^2$ with Euler number $e=2$ is depicted.}
	\label{fig:disk_bundles}
\end{figure}

Next, we will analyze how the trisection diagrams coming from our algorithm are related to other geometric operations. 

\subsection{Connected sums of 4-dimensional open books}
First, we will analyze the connected sum of open books. We recall how to perform connected sums of open books. For that, we remark that any interior point on the binding $B$ of an open book on an $n$-manifold $W$ has a neighborhood $D^n$ in $W$ on which the open book is isotopic to the standard open book from Example~\ref{ex:standOB}. If we have two $4$-manifolds $W_1$, $W_2$ with open book decompositions we get a natural open book decomposition, the \textit{open book connected sum}, on the connected sum $W_1\#W_2$ by performing a connected sum using $n$-disks intersecting the binding as above and gluing the pages of the open books together. If the open books on $W_i$ are presented as abstract open books $\Ob(M_i,\Phi_i)$ then the open book connected sum on $W_1\# W_2$ is given by
\begin{equation*}
    \Ob(M_1\natural M_2,\Phi_1*\Phi_2),
\end{equation*}
where $M_1\natural M_2$ denotes a boundary connected sum of $M_1$ and $M_2$ (which is not unique if the bindings $B_1$ or $B_2$ are not connected) and $\Phi_1*\Phi_2$ denotes the concatenation of the monodromies. The binding of the open book connected sum is given by the connected sum $B_1\#B_2$ of the bindings (again there are several possible choices if the binding is disconnected).

The next result shows that our algorithm is compatible with connected sums.

\begin{prop}\label{prop:con_sum}
    Connected sum of $4$-dimensional open books induce connected sum of trisection diagrams, i.e.~if $(\Sigma_i,\alpha_i,\beta_i,\gamma_i)$, for $i=1,2$, are trisection diagrams obtained via Theorem~\ref{thm:A} from open books $\operatorname{Ob}(M_i,\Phi_i)$, then the trisection diagram obtained via Theorem~\ref{thm:A} from the open book connected sum $\operatorname{Ob}(M_1\natural M_2,\Phi_1*\Phi_2)$ is given by the connected sum $(\Sigma_1,\alpha_1,\beta_1,\gamma_1)\#(\Sigma_2,\alpha_2,\beta_2,\gamma_2)$.
\end{prop}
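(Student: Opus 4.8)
The plan is to run the algorithm of Theorem~\ref{thm:A} on both open books with compatible auxiliary choices, so that it carries the open book connected sum to the connected sum of diagrams verbatim. Recall that this connected sum is a local operation near interior binding points $p_i\in B_i$, on neighbourhoods where the open book is standard; so we may assume the $p_i$ lie in the trivial collar of the page, disjoint from all handles of our Heegaard splittings and from the supports of $\Phi_1$ and $\Phi_2$.

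First I would fix $\Phi_i$-invariant Heegaard splittings $M_i=K_i\cup_{F_i}H_i$ (Proposition~\ref{prop:splitting}) with relative handle decompositions as in~\eqref{eq:K_handle_decomposition}, arranged as above, and assemble the \emph{connected sum of Heegaard splittings} of $M_1\natural M_2$: glue $K_1$ to $K_2$ by identifying a disk $D\subset B_1=\partial_- K_1$ with a disk in $B_2=\partial_- K_2$ (so that $D$ lies in the interior of $K_1\cup_D K_2$), then remove a tube $\nu(\gamma)$ along an arc $\gamma$ from $F_1$ to $F_2$ running through $D$ and avoiding all $1$-handles, putting
\[
K:=\big(K_1\cup_D K_2\big)\setminus\nu(\gamma),\qquad H:=(H_1\sqcup H_2)\cup\nu(\gamma)=H_1\natural H_2.
\]
This is a Heegaard splitting of $M_1\natural M_2$ with $\partial_- K=B_1\#B_2$, Heegaard surface $F_1\#F_2$, and whose set of $1$-handles is the disjoint union of those of $K_1$ and of $K_2$; since $D$ and $\gamma$ lie where both monodromies are the identity, $\Phi_1*\Phi_2$ preserves it (and $g_{\Phi_1*\Phi_2}(M_1\natural M_2)=g_{\Phi_1}(M_1)+g_{\Phi_2}(M_2)$, so it is the splitting the algorithm operates on). The point of this step is that the associated normal-form Heegaard diagram---each binding component represented by an $S^2$ with attaching regions, each $1$-handle of $K$ by a pair of attaching regions with parallel $\epsilon$-curves---is precisely the connected sum of the normal-form diagrams of $M_1$ and $M_2$: the two $S^2$'s carrying the binding components being joined are tubed together trivially, the $1$-handle data of $K_1$ and $K_2$ are carried over unchanged, and $\nu(\gamma)$ contributes only the connect-sum neck of $F_1\#F_2$ and no new $\delta$-curve.

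Next I would run the construction of Section~\ref{sec:proof}. The trisection surface is built from $B=B_1\#B_2$ by attaching, for each $1$-handle of $K$ and each of the three angles, a tube along an arc $c^{\theta}_{i,z}$; as the $1$-handles of $K$ are those of $K_1$ together with those of $K_2$ and the arcs sit in the $B_1$- resp.\ $B_2$-part away from the neck, this gives $\Sigma=\Sigma_1\#\Sigma_2$, the connected sum taken at the point produced by $B_1\#B_2$. For the same handle-by-handle reason the curves $\partial\Delta^1_{i,j}$ of~\eqref{eq:Delta1}, the curves $\partial\Delta^2_{i,j}$ with $j=0,1$ of~\eqref{eq:Delta2}, and the curves coming from the $\delta$-curves are each simply the union of the corresponding families for $M_1$ and $M_2$. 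For the last family $\partial\Delta^2_{i,2}$ of~\eqref{eq:Delta23}---the attaching spheres of the $2$-handles $b_i$ from Lemma~\ref{lem:icebreaker} applied to $K$ and $\Phi=\Phi_1*\Phi_2$---I would write $\Phi_1*\Phi_2=(\Phi_1*\operatorname{Id})\circ(\operatorname{Id}*\Phi_2)$ and apply Theorem~\ref{thm:handle_slides_generate} to each factor, getting an expression of $\Phi_1*\Phi_2$ as a composition of elementary handle maps each of which involves only the $1$-handles of $K_1$ or only those of $K_2$. Feeding this into the inductive construction of the $b_i$ (Lemma~\ref{lem:icebreaker}, Remark~\ref{rem:icebreaker}), the factors belonging to $\Phi_2$ act trivially on $K_1$'s handles and conversely, so the $b_i$ over a $1$-handle of $K_1$ is built exactly as in the algorithm for $M_1$, and likewise for $K_2$, the finger moves of Remark~\ref{rem:icebreaker} taking place separately in the $\Sigma_1$- and $\Sigma_2$-regions. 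Hence this family is again a union, and altogether the diagram produced from $\operatorname{Ob}(M_1\natural M_2,\Phi_1*\Phi_2)$ equals $(\Sigma_1\#\Sigma_2,\ \alpha_1\cup\alpha_2,\ \beta_1\cup\beta_2,\ \gamma_1\cup\gamma_2)=(\Sigma_1,\alpha_1,\beta_1,\gamma_1)\#(\Sigma_2,\alpha_2,\beta_2,\gamma_2)$.

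The main obstacle is the last family $\partial\Delta^2_{i,2}$: one has to verify that the icebreaker construction, including the finger moves of Remark~\ref{rem:icebreaker}, never mixes $K_1$'s and $K_2$'s handles, so that the $2$-handles $b_i$ remain supported in their respective $\Sigma_i$-regions. This should follow because, after concatenating the two elementary-handle-map factorizations, the two groups of attaching regions and attaching spheres are disjointly supported away from the trivial gluing neck, but it needs a careful argument. Everything else reduces to the observation that the pieces of the construction are assembled handle-by-handle and so respect the disjoint decomposition of the page; and, as usual, the output of the algorithm and the connected sum of diagrams are both well defined only up to handle slides and depend on choices, so the asserted equality is understood in that sense.
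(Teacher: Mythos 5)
Your proposal is correct and follows essentially the same route as the paper: both arguments come down to producing a relative handle decomposition (equivalently, an invariant Heegaard splitting) of $M_1\natural M_2$ relative to $B_1\# B_2$ that agrees with the given decompositions of $M_1$ and $M_2$ away from the connect-sum region and then invoking the locality of the algorithm -- the paper gets it by attaching a connecting $1$-handle and cancelling its dual $2$-handle against one of the $3$-handles, while you get the same splitting by gluing $K_1$ to $K_2$ along a disk in the bindings and removing a tube, and you additionally spell out the splitting of the $\gamma$-family via the factorization $\Phi_1*\Phi_2=(\Phi_1*\operatorname{Id})\circ(\operatorname{Id}*\Phi_2)$, which the paper leaves implicit. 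One small caveat: your parenthetical claim that $g_{\Phi_1*\Phi_2}(M_1\natural M_2)=g_{\Phi_1}(M_1)+g_{\Phi_2}(M_2)$ is neither needed (the proposition concerns the diagrams produced from the chosen splittings) nor justified by your construction, which only gives the inequality $\leq$.
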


\begin{proof}
    We perform the boundary connected sum $M_1\natural M_2$ by attaching a $1$-handle to the disjoint union of $M_1$ and $M_2$ such that one foot of the $1$-handle gets attached to $M_1$ and the other to $M_2$. In the dual handle decomposition, this $1$-handle corresponds to a $2$-handle which can be canceled either with the unique $3$-handle of $M_1$ or the unique $3$-handle of $M_2$. After this cancellation, we have a handle decomposition of $M_1\natural M_2$ relative to $B_1\#B_2$ that is away from the reducing sphere given by the old handle decompositions of $M_1$ and $M_2$. Thus the algorithm from Theorem~\ref{thm:A} yields the connected sum of the trisections.
\end{proof}

\subsection{Spuns and twist spuns, revisited}
We remark again that our algorithm generalizes the results of Meier on spuns and twist spuns of $3$-manifolds~\cite{Me18} by seeing the (twist) spun of a closed $3$-manifold $M$ as the open book with page $M\setminus\mathring D^3$ with monodromy the identity (a boundary parallel sphere twist). From a Heegaard diagram of $M$ we obtain directly a Heegaard diagram of $M \setminus \mathring D^3$, which looks identical, by assuming that we take out the $0$-handle without loss of generality. Moreover, for the twist spun, we assume further that the factorization of the sphere twist into elementary handle maps as explained at the end of the proof of Theorem~\ref{thm:handle_slides_generate} is given by twisting the two feet of one $1$-handle each with opposite sign and sliding the other feet of the other handles along concentric circles around these. 

If there is just one $1$-handle in the diagram, the obtained trisection diagram for the twist spun agrees with the trisection diagram of the spun manifold, after pulling the green curves straight, which is possible as both feet are twisted with opposite signs. This recovers the result that the twist spun and spun manifolds are diffeomorphic for manifolds admitting Heegaard splittings of genus one~\cite{Me18}. 

On the other hand, it is known that the twist spun and the spun of a aspherical $3$-manifolds are always non-diffeomorphic~\cite{Pl86}. In general, it is unknown when the spun and the twist spun agree, cf.~\cite{Me18}.

\begin{ex}\label{ex:torus_twists_OB} 
First, we consider the case of spuns of the lens spaces $L(p,q)$. Any Heegaard diagram of $L(p,q)$ gives a trisection diagram, by our algorithm. On the other hand, we see that spuns of the lens spaces $L(p,q)$ have other simple open books, yielding different diagrams. Note that the spun of a lens space is independent of $q$~\cite{Pa77,Pl86,Me18}.

Let $M$ be a solid torus with an open $3$-ball removed. We denote by $\Phi_T$ the twist along the boundary parallel $2$-torus. We define $4$-manifolds $W_p$ by
\begin{equation*} W_p=\operatorname{Ob}(M,\Phi_T^p).
\end{equation*}
\begin{figure}[htbp] 
\centering
\def\svgwidth{0.84\columnwidth}
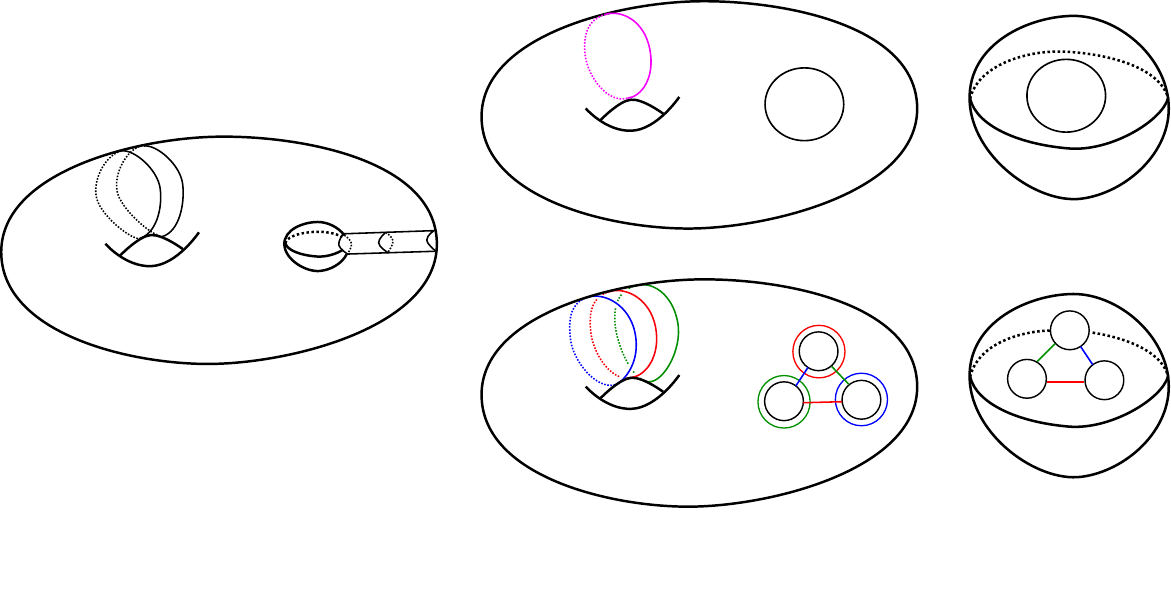
\caption{The handle decomposition of the punctured solid torus $M$ in (a) induces the Heegaard diagram in (b). The open book with page $M$ and trivial monodromy induces the trisection diagram of $S^1\times S^3 \# S^2\times S^2$ in (c).}
	\label{fig:spuns}
\end{figure}
To analyze the diffeomorphism type of $W_p$ we apply the algorithm from Theorem~\ref{thm:A} to this open book. For that we choose a handle decomposition of $M$ given by a single $1$-handle connecting the two boundary components of $M$, a single $2$-handle attached along a meridian of the solid torus, and a unique $3$-handle as shown in Figure~\ref{fig:spuns}(a). The induced Heegaard diagram is shown in Figure~\ref{fig:spuns}(b). This Heegaard splitting is preserved by the monodromy. Figure~\ref{fig:spuns}(c) shows the trisection diagram $W_0$ which is directly identified as the standard trisection diagram of $S^1\times S^3\# S^2\times S^2$. We can see this of course also directly from the open books:
\begin{align*}
    W_0=\operatorname{Ob}(M,\operatorname{Id})&=\operatorname{Ob}(S^1\times D^2\natural S^2\times I,\operatorname{Id})\\
    &=\operatorname{Ob}(S^1\times D^2,\operatorname{Id})\# \operatorname{Ob}(S^2\times I,\operatorname{Id})\\
    &=S^1\times\operatorname{Ob}( D^2,\operatorname{Id})\# S^2\times\operatorname{Ob}( I,\operatorname{Id})\\
    &=S^1\times S^3 \# S^2\times S^2.
\end{align*}
For $p=1$ the trisection diagram is shown in Figure~\ref{fig:stab} where we also show that it is handle slide equivalent to the stabilization of the standard genus $0$-trisection of $S^4$. Similarly, we can show that $W_p$ is the spun of a lens space $L(p,q)$, see for example Figure~\ref{fig:w2} for the case $p=2$. This can also be seen via explicit handle calculus~\cite{Hs23}. Observe that the manifolds $W_p$ are pairwise non-diffeomorphic which implies that $\Phi_T$ seen as an element in the mapping class group of $M$ has infinite order.
\end{ex}

\begin{figure} 
\centering
\def\svgwidth{0.84\columnwidth}
\begingroup%
  \makeatletter%
  \providecommand\color[2][]{%
    \errmessage{(Inkscape) Color is used for the text in Inkscape, but the package 'color.sty' is not loaded}%
    \renewcommand\color[2][]{}%
  }%
  \providecommand\transparent[1]{%
    \errmessage{(Inkscape) Transparency is used (non-zero) for the text in Inkscape, but the package 'transparent.sty' is not loaded}%
    \renewcommand\transparent[1]{}%
  }%
  \providecommand\rotatebox[2]{#2}%
  \newcommand*\fsize{\dimexpr\f@size pt\relax}%
  \newcommand*\lineheight[1]{\fontsize{\fsize}{#1\fsize}\selectfont}%
  \ifx\svgwidth\undefined%
    \setlength{\unitlength}{541.62355438bp}%
    \ifx\svgscale\undefined%
      \relax%
    \else%
      \setlength{\unitlength}{\unitlength * \real{\svgscale}}%
    \fi%
  \else%
    \setlength{\unitlength}{\svgwidth}%
  \fi%
  \global\let\svgwidth\undefined%
  \global\let\svgscale\undefined%
  \makeatother%
  \begin{picture}(1,0.42708369)%
    \lineheight{1}%
    \setlength\tabcolsep{0pt}%
    \put(0,0){\includegraphics[width=\unitlength,page=1]{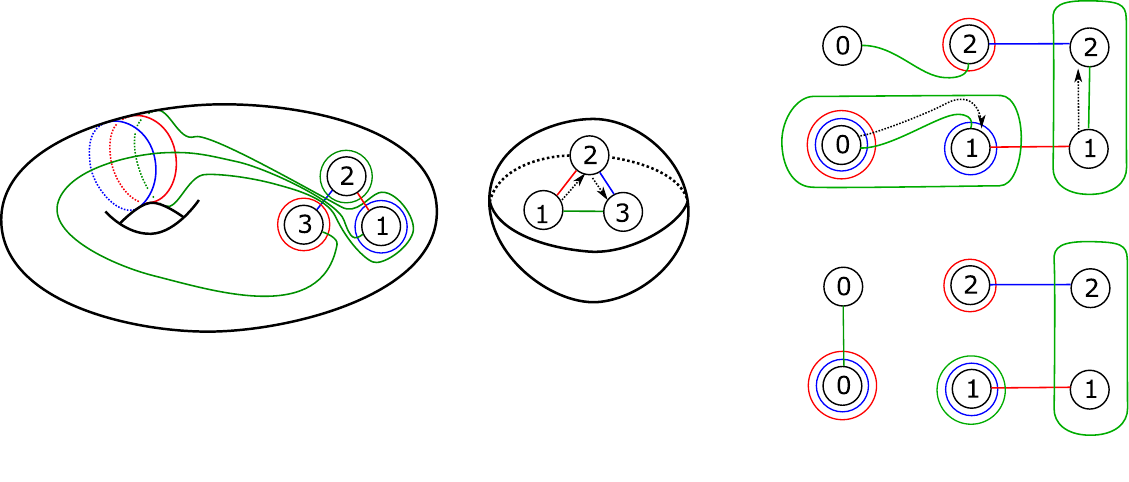}}%
    \put(0.25352443,0.00450802){\color[rgb]{0,0,0}\makebox(0,0)[lt]{\lineheight{1.25}\smash{\begin{tabular}[t]{l}$(a)$\end{tabular}}}}%
    \put(0.86479155,0.22805561){\color[rgb]{0,0,0}\makebox(0,0)[lt]{\lineheight{1.25}\smash{\begin{tabular}[t]{l}$(b)$\end{tabular}}}}%
    \put(0.86479127,0.00334279){\color[rgb]{0,0,0}\makebox(0,0)[lt]{\lineheight{1.25}\smash{\begin{tabular}[t]{l}$(c)$\end{tabular}}}}%
  \end{picture}%
\endgroup%

\caption{By performing a twist along the boundary parallel torus in $M$ we get the trisection diagram of $W_1$ shown in (a). We simplify that diagram by sliding the handles with index $1$ and $2$ as indicated with the dashed arrows and obtain the planar diagram in (b). By further sliding the handle with index $0$ along the dashed line we get the trisection diagram of $W_1$ in (c). (Here we observe that the $0$-indexed $1$-handle as belt spheres in red and blue and thus we can perform $2$-handle slides of the red and blue curves to push the attaching region of the $1$-handle through every blue or red curve.) The resulting diagram destabilizes to the standard genus-$0$ trisection diagram of $S^4$.}
	\label{fig:stab}
\end{figure}

\begin{figure}
    \centering
    \includegraphics[width=.9\textwidth]{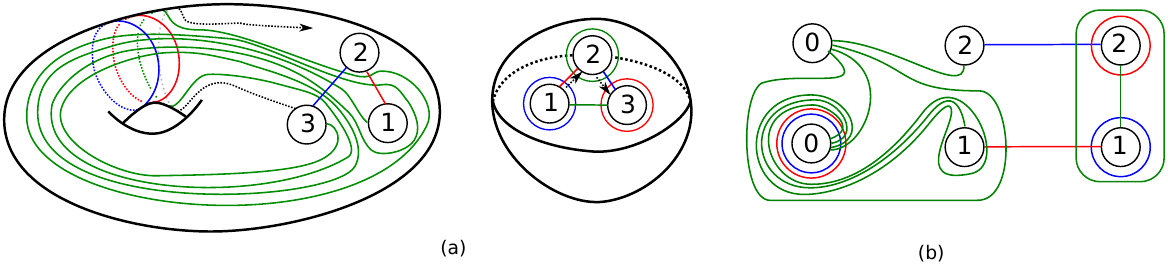}
    \caption{Trisection diagram for $W_2$ shown in $(a)$. As before the dashed arrows indicate the handle slides used to obtain the simplified planar diagram in $(b)$.}
    \label{fig:w2}
\end{figure}

\begin{ex}\label{ex:twistspun} Secondly, we consider the twist spun manifold of the Poincar\'e homology sphere $M$. The Poincar\'e homology sphere has a Heegaard diagram of genus two, given in Figure~\ref{fig:Poincaresphere} (cf.\ \cite[Figure 10]{Me18}). In Figure~\ref{fig:twistspun}, we give the trisection diagram resulting from our algorithm for the twist spun of $M$. 
\end{ex}

\begin{rem}
    We remark that the trisection diagrams of the twist spuns we get from our algorithm do not agree with the trisection diagrams in~\cite{Me18}. In fact, \cite{Me18} has a small mistake in the presentation of the trisection diagrams of the twist spuns, which was later corrected in~\cite{GM22}.
\end{rem}

\begin{figure}
    \centering
    \includegraphics[width=.5\textwidth]{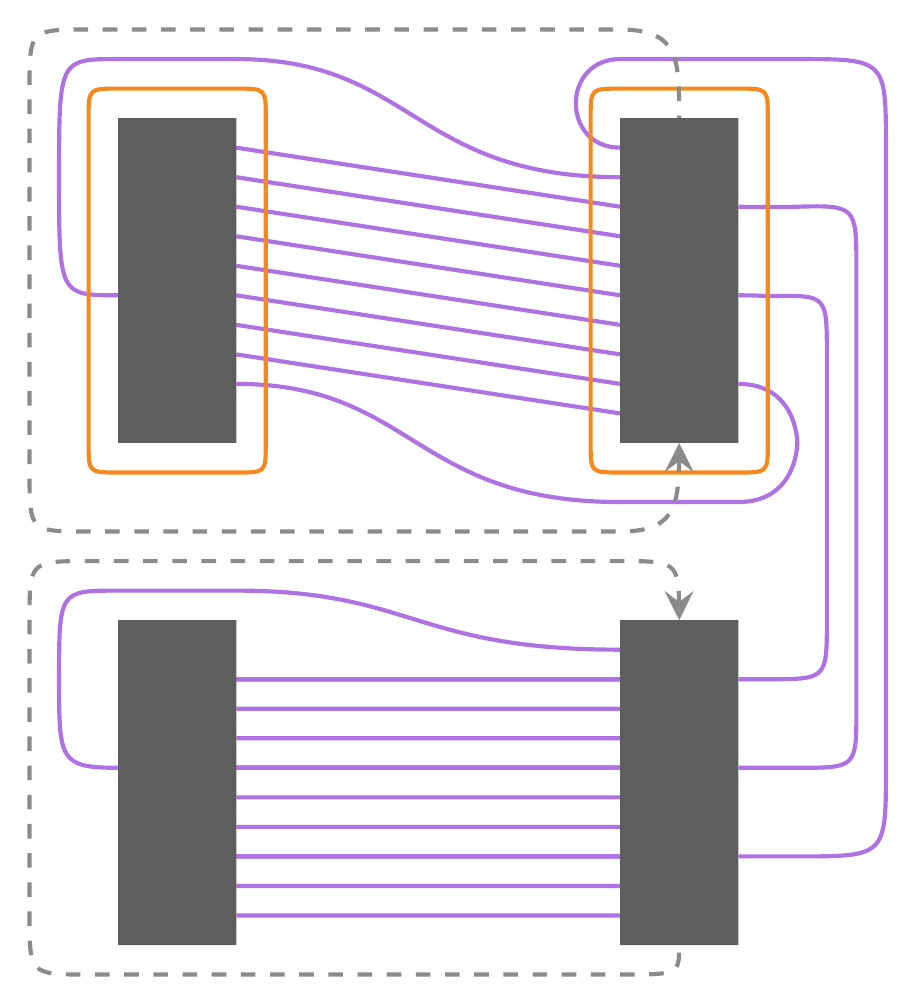}
    \caption{A Heegaard diagram of the Poincar\'e homology sphere. The shaded regions are identified via reflection along a horizontal line passing between the shaded regions. The dotted arrows indicate the monodromy for the twist spun given by sliding the feet of the handles. }
    \label{fig:Poincaresphere}
\end{figure}

\begin{figure}
    \centering
    \includegraphics[width=.75\textwidth]{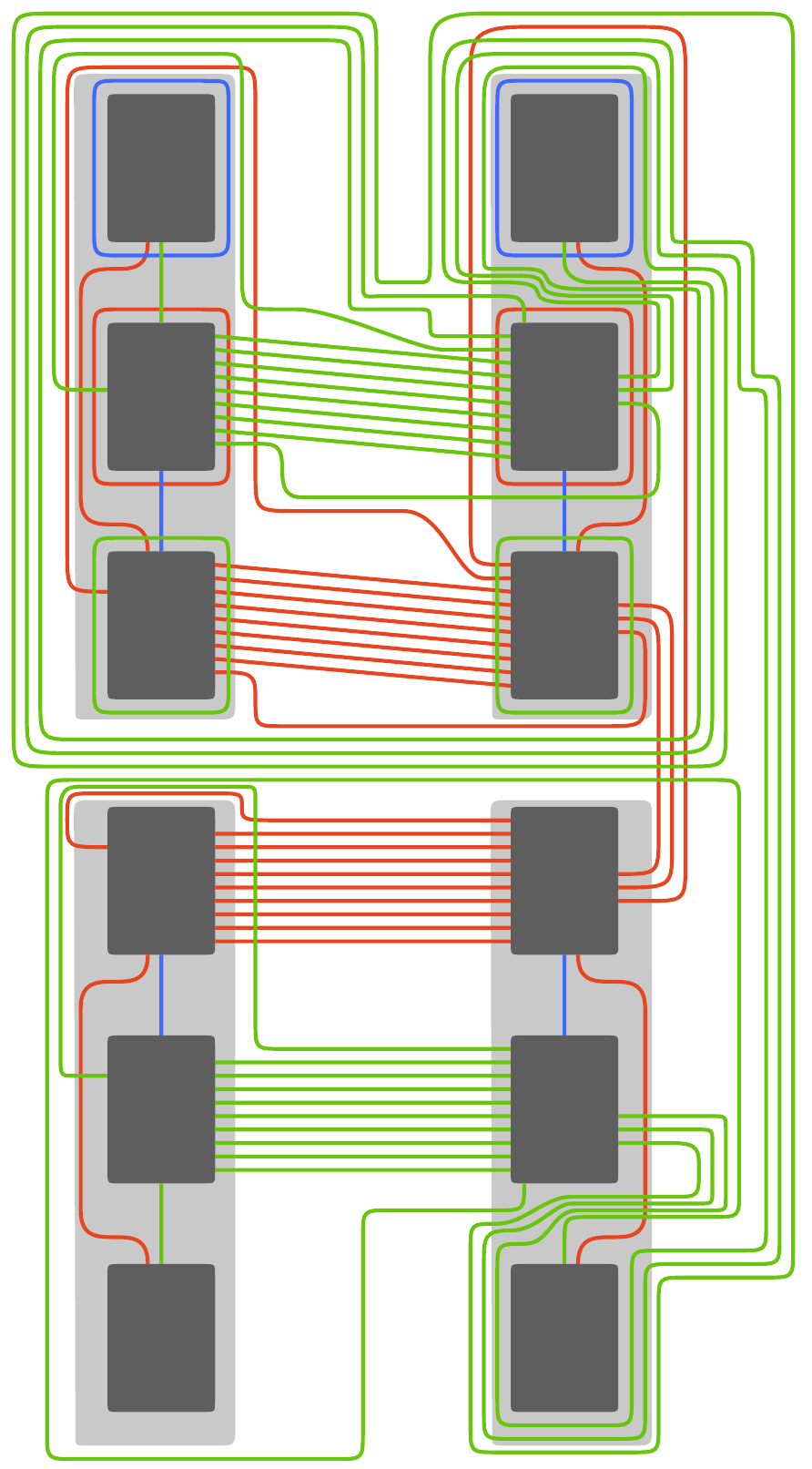}
    \caption{A trisection diagram of the twist spun of the Poincar\'e homology sphere obtained from our algorithm. The dark-shaded regions are identified by reflection along a horizontal line passing between the light-shaded regions. The two blue curves coming from $\delta$-curves are not drawn as they are completely parallel to the long red curves. }
    \label{fig:twistspun}
\end{figure}

\subsection{Stabilizations of 4-dimensional open books}
Next, we will show that stabilizing a $4$-dimensional open book corresponds to stabilizing the trisection. We recall the stabilization operation for $4$-dimensional open books. Let $S^1\times D^2\setminus \mathring D^3$ be a solid torus with a small open $3$-ball removed. We denote by $\Phi_T$ a torus twist along a boundary parallel $2$-torus. In Example~\ref{ex:torus_twists_OB} we have shown that $\Ob(S^1\times D^2\setminus \mathring D^3,\Phi_T)$ is an open book of $S^4$. We define the \textit{stabilization} of an abstract open book $\Ob(M,\Phi)$ on a $4$-manifold to be the open book connected sum of $\Ob(M,\Phi)$ with $\Ob(S^1\times D^2\setminus \mathring D^3,\Phi_T)$. Since $\Ob(S^1\times D^2\setminus \mathring D^3,\Phi_T)$ is an open book of $S^4$ this operation does not change the underlying $4$-manifold. The next result says that $4$-dimensional stabilization of open book is compatible with trisections.

\begin{prop}
Stabilizing a $4$-dimensional open book corresponds to stabilizing the trisection diagrams, i.e.~if $(\Sigma,\alpha,\beta,\gamma)$ is the trisection diagram obtained via Theorem~\ref{thm:A} from an open book $\operatorname{Ob}(M,\Phi)$, then the trisection diagram obtained via Theorem~\ref{thm:A} from the stabilized open book $\operatorname{Ob}(M\natural S^1\times D^2\setminus \mathring D^3,\Phi*\Phi_T)$ is given by the stabilization of $(\Sigma,\alpha,\beta,\gamma)$.
\end{prop}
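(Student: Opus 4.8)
The plan is to deduce the proposition directly from Proposition~\ref{prop:con_sum} together with the model computation carried out in Example~\ref{ex:torus_twists_OB}, so that essentially no new argument is needed. By definition the stabilized open book is the open book connected sum $\operatorname{Ob}(M,\Phi)\#\operatorname{Ob}(S^1\times D^2\setminus\mathring D^3,\Phi_T)$, and the stabilizing summand $\operatorname{Ob}(S^1\times D^2\setminus\mathring D^3,\Phi_T)$ is an open book of $S^4$. Applying Proposition~\ref{prop:con_sum} to these two summands immediately yields that Theorem~\ref{thm:A} assigns to the stabilized open book the connected sum $(\Sigma,\alpha,\beta,\gamma)\#(\Sigma',\alpha',\beta',\gamma')$, where $(\Sigma',\alpha',\beta',\gamma')$ is the diagram Theorem~\ref{thm:A} assigns to $\operatorname{Ob}(S^1\times D^2\setminus\mathring D^3,\Phi_T)$. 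I would note here that the open book connected sum is performed in a $4$-ball around an interior point of the binding, over which the page carries no handles; hence on $\Sigma$ this point sits away from every $\alpha$-, $\beta$-, and $\gamma$-curve, so the connected sum of diagrams is taken in the usual, well-defined way, exactly as in the proof of Proposition~\ref{prop:con_sum}.

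It then remains to identify $(\Sigma',\alpha',\beta',\gamma')$. Running the algorithm on $\operatorname{Ob}(S^1\times D^2\setminus\mathring D^3,\Phi_T)$ with the handle decomposition of Example~\ref{ex:torus_twists_OB} (one $1$-handle, one $2$-handle, one $3$-handle, the monodromy preserving the induced genus-one Heegaard splitting) produces a $(3,1)$-trisection diagram of $S^4$, and Example~\ref{ex:torus_twists_OB} (see Figure~\ref{fig:stab}) shows that this diagram is handle-slide equivalent to a single balanced stabilization of the genus-$0$ trisection diagram of $S^4$, i.e.\ to the standard $(3,1)$-diagram of $S^4$. On the other hand, the stabilization operation for trisection diagrams recalled in Section~\ref{sec:background} — the modification along the three boundary-parallel arcs $\gamma_{01},\gamma_{02},\gamma_{12}$ — is supported in a $4$-ball inside which the trisection before the operation is the genus-$0$ trisection of $S^4$ and after it is its balanced stabilization; equivalently, stabilizing an arbitrary trisection diagram is the same as taking its connected sum with the standard $(3,1)$-diagram of $S^4$.

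Combining these two points, and using that handle slides supported in one connected summand extend to handle slides of the connected sum, $(\Sigma,\alpha,\beta,\gamma)\#(\Sigma',\alpha',\beta',\gamma')$ is handle-slide equivalent to the connected sum of $(\Sigma,\alpha,\beta,\gamma)$ with the standard $(3,1)$-diagram of $S^4$, which is precisely the stabilization of $(\Sigma,\alpha,\beta,\gamma)$; this would complete the proof. I do not expect a genuine obstacle here: the only substantive input is the identification, already established in Example~\ref{ex:torus_twists_OB}, of the algorithm's output on $\operatorname{Ob}(S^1\times D^2\setminus\mathring D^3,\Phi_T)$ with the standard stabilization diagram. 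The remaining care is purely formal — checking that the connected-sum point genuinely avoids the three curve systems (so that Proposition~\ref{prop:con_sum} applies verbatim) and recalling that the three-arc stabilization of a trisection is literally connected sum with the standard $(3,1)$-trisection of $S^4$.
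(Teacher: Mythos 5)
Your argument is correct and follows essentially the same route as the paper: the paper's proof likewise combines Proposition~\ref{prop:con_sum} with the identification in Example~\ref{ex:torus_twists_OB} (Figure~\ref{fig:stab}) of the algorithm's output on $\operatorname{Ob}(S^1\times D^2\setminus\mathring D^3,\Phi_T)$ with the stabilized genus-$0$ diagram of $S^4$. Your additional remarks (the connected-sum point avoiding the curve systems, and stabilization being connected sum with the standard $(3,1)$-diagram, up to handle slides) only make explicit what the paper leaves implicit.
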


\begin{proof}
We have shown in Example~\ref{ex:torus_twists_OB} that $\Ob(S^1\times D^2\setminus \mathring D^3,\Phi_T)$ yields the stabilized trisection diagram of $S^4$. Proposition~\ref{prop:con_sum} then implies that if we apply the algorithm from Theorem~\ref{thm:A} to $\operatorname{Ob}(M\natural S^1\times D^2\setminus \mathring D^3,\Phi*\Phi_T)$ we get a stabilization of $(\Sigma,\alpha,\beta,\gamma)$.
\end{proof}

\subsection{Stabilizing 3-dimensional open books}
On the other hand, we can consider a $3$-dimensional open book $\Ob(F^2,\Phi)$ of a closed $3$-manifold $N$, where the page is a surface $F$. Then we get an induced open book on $S^1\times N$ by $\Ob(S^1\times F,\operatorname{Id}_{S^1}\times \Phi)$. 

\begin{ex} We consider $N=S^3$ with standard open book $\Ob(D^2,\operatorname{Id})$. This induces an open book on $S^1\times S^3$ given by $\Ob(S^1\times D^2,\operatorname{Id})$. $S^1\times D^2$ has a handle decomposition consisting of a single $2$-handle and a single $3$-handle. And thus the algorithm from Theorem~\ref{thm:A} readily produces the standard genus-$1$ trisection diagram of $S^1\times S^3$.
\end{ex}

We can also perform a $3$-dimensional stabilization of the open book on $N^3$ and perform the same construction. Note that the resulting open book is not a $4$-dimensional stabilization since the topologies of the pages differ. 

\begin{ex}
We consider $N=S^3$ with the stabilized open book $\Ob(S^1\times I,\Phi)$ where $\Phi$ denotes a Dehn twist along the core curve of the annulus. This induces an open book on $S^1\times S^3$ given by $\Ob(S^1\times S^1\times I,\operatorname{Id}\times \Phi)$. In fact, we have already analyzed this open book and its trisection diagram in Section~\ref{sec:4dimOB} and Figure~\ref{fig:twistedT2}. In particular, one can see that it induces a trisection diagram of the stabilization of the standard trisection of $S^1\times S^3$.
\end{ex}

A similar construction works on more general $S^1$-bundles over $3$-manifolds by first constructing open books coming from open books on $M$ and then constructing trisection diagrams.


\end{document}